\newtheorem{theorem}{Theorem}
\newtheorem{lemma}{Lemma}
\newtheorem{definition}{Definition}
\newtheorem{assumption}{Assumption}
\newtheorem{corollary}{Corollary}
\title{An Optimal Solution to Infinite Horizon Nonlinear Control Problems}
\author{Mohamed Naveed Gul Mohamed, Raman Goyal, Suman Chakravorty 
\thanks{The authors are with the Department of Aerospace Engineering, Texas A\&M University, College Station, TX 77843, USA. \{\tt naveed, ramaniitrgoyal92, schakrav\}@tamu.edu
}}
\begin{document}

\maketitle

\begin{abstract}
In this paper, we consider the infinite horizon optimal control problem for nonlinear systems. Under the conditions of controllability of the linearized system around the origin, and nonlinear controllability of the system to a terminal set containing the origin, we establish an approximate regularized solution approach consisting of a ``finite free final time" optimal transfer problem to the terminal set, and an infinite horizon linear regulation problem within the terminal set, that is shown to render the origin globally asymptotically stable. Further, we show that the approximations converge to the true optimal cost function as the size of the terminal set decreases to zero. The approach is empirically evaluated on the pendulum and cart-pole swing-up problems to show that the finite time transfer is far shorter than the effective horizon required to solve the infinite horizon problem without the proposed regularization.
\end{abstract}
\begin{keywords}
Nonlinear control, Infinite horizon optimal control, Control Lyapunov function
\end{keywords}
\section{Introduction}
The goal of an optimal control problem is to find the control inputs that minimize a given cost function subject to constraints on the system dynamics. Further, it is desired that the optimal control problem results in a globally asymptotically stable closed-loop system.  In order to satisfy the global asymptotic stability requirement, one has to pose an infinite horizon control problem, i.e., where the cost is an infinite sum.
Although there exists infinite horizon optimal control law for specific, mostly linear, systems, obtaining an optimal solution for an infinite horizon nonlinear control problem is a very challenging task owing to the infinite horizon \cite{bertsekas_vol1}. In this work, we propose an approximate solution to the infinite horizon optimal control problem by turning the problem into an equivalent finite horizon problem. In particular, we note that if the infinite horizon cost is finite, the tail sum of the cost has to vanish, which implies that the system has to spend most of the infinite time around the origin. Therefore, assuming that the system linearization is controllable, we can approximate the tail cost after some finite time with the optimal linear cost function obtained by solving the stationary Riccati equation. Thus, the infinite horizon problem reduces to finding the optimal insertion time, along with the associated control inputs, into a level set of the optimal linear cost function that contains the origin. As the size of these level sets is decreased, the approximations are shown to converge to the true optimal cost function. Furthermore, although the solution may not be optimal for a finite level set, it nonetheless provides a control Lyapunov function that can globally asymptotically stabilize the underlying system, under a mild nonlinear controllability assumption that any state can be controlled into a terminal set containing the origin. \\

The infinite horizon optimal control problem may equivalently be posed as the stationary Dynamic Programming (DP) problem for discrete-time systems, or the stationary Hamilton-Jacobi-Bellman (HJB) equation in continuous time systems \cite{bertsekas_vol1,Bellman:1957} which turns the sequential decision making problem into a single step decision making problem given one knows the optimal cost function. It is also well known that the optimal solution to the stationary HJB equation is a globally asymptotically stabilizing control Lyapunov function \cite{bernstein1993nonquadratic,wan1992family, wan1995nonlinear}. However, it is also well known that the solution is computationally intractable owing to Bellman's dreaded ``curse of dimensionality" \cite{bertsekas_vol1,Bellman:1957}. Thus, there is a very large literature on Approximate DP (ADP) and Reinforcement Learning  that seeks to alleviate the curse of dimensionality. Approximate dynamic programming methods \cite{ADP_handbook,lewis2013reinforcement} typically use aggregation methods \cite{bertsekas1989adaptive}  \cite{haskell2016empirical} or various function approximation methods \cite{geramifard2013tutorial} \cite{munos2008finite} to give an approximately optimal policy/value function with a high confidence.  Reinforcement learning  \cite{sutton2018reinforcement} and approximate dynamic programming are typically used in the same sense, though the keyword reinforcement learning is often associated with a model-free approach where one does not have ``explicit" knowledge of the model, and instead, seeks to improve the control policy  by repeated interactions with the environment while observing the system's responses. The repeated interactions, or learning trials, allow these algorithms to compute the solution of the dynamic programming problem (optimal value/Q-value function or optimal policy) without explicitly constructing the model of the unknown dynamical system. Standard RL algorithms are broadly  divided into value-based methods, like Q-learning \cite{watkins1992q}, and policy-based methods, like policy gradient algorithms \cite{sutton2000policy}. Recently, function approximation using deep neural networks has significantly improved the performance of reinforcement learning algorithms, leading to a growing class of literature on `deep reinforcement learning' \cite{silver2016mastering1, schulman2015trust,SAC,TD3}. Despite the success on relatively higher dimensional problems than previously possible, the inherent variance in the solution \cite{henderson2018deep,D2C2.0_CDC,arxiv_D2C2.0} renders them unreliable and the training time required of these methods remain prohibitive.  \\

An alternative ``direct" approach to the HJB is to solve the underlying infinite horizon optimal control problem given a particular initial state. The field of Model Predictive Control (MPC) takes this approach to solving the infinite horizon problem, however, owing to the infinite horizon nature of the involved optimal control problem, MPC solves a ``fixed final time" finite horizon problem in its stead, takes the first control action, and repeats the process once at the next state \cite{mayne2014model,mayne2000constrained}. The ``stabilizing ingredient" for the asymptotic stability of the MPC approach is the use of a suitable terminal cost function in the optimization problem that is a control Lyapunov function for the system in some terminal set containing the origin \cite{mayne2014model}.  Nonetheless, the domain of attraction of the MPC law can be undesirably small, and thus, different methods have been suggested to increase the domain of attraction \cite{MPC_GAS1,MPC_GAS2,MPC_GAS3}. Alternatively, one can eschew the use of a terminal cost function and set using a suitable long horizon \cite{grune2017}, but this typically leads to intractability owing to very long prediction horizons \cite{MPC_FTC1}.
Our approach to solving the infinite horizon problem is similar to MPC in that we ``directly" solve the optimal control problem but the key difference is that given a state, we solve a ``free-final time" problem for insertion into a terminal set in which the optimal linear feedback law is asymptotically stabilizing and/or a good approximation of the true optimal cost function. We can then establish the global asymptotic stability of the resulting feedback law under a mild nonlinear controllability assumption, and also show that the approximation converges to the true optimal cost function as the size of the terminal set is reduced to zero. Finally, there is no need for replanning in the approach. The limitation is that we do not consider state or control constraints in the problem. \\

The primary contribution of this paper is a tractable direct approach for the solution of infinite horizon optimal control problems that is globally asymptotically stabilizing for nonlinear systems under the conditions of controllability of the system linearization around the origin, and a nonlinear controllability assumption into a terminal set containing the origin. The rest of the paper is organized as follows: we introduce the problem in Section II, the solution approach is detailed in Section III including the heuristic idea behind the construction, and the method is tested on empirically on several nonlinear systems in Section IV.

 

\section{Preliminaries}\label{section:prob}
Consider the following problem which we want to solve:
\begin{subequations}\label{eq.IHOCP}
\begin{align}
    J_{\infty}^*(x) = \min_{\{u_t\}} \sum_{t=0}^{\infty} c(x_t, u_t);~~ &\text{given} ~x_0 = x \label{eq.infinite_horizon_cost_func} \\
    \text{subject to the dynamics: } x_{t+1} &= f(x_t, u_t), \label{eq.dynamics}
\end{align}
\end{subequations}
where $x_t \in \mathbb{R}^n$ represents the state of the dynamical system, $u_t \in \mathbb{R}^p$ represents the control input to the dynamical system, and $c(x_t, u_t)$ is the incremental cost incurred in taking control action $u_t$ at state $x_t$. The above problem is an infinite horizon optimal control problem (IH-OCP), and thus, solving the problem is, in general, intractable owing to the infinite horizon of the problem. 

Our goal in this work is to develop a tractable approach to solving the above problem by transforming the problem into a suitable finite horizon problem.

Given that we can obtain a solution to the IH-OCP \cref{eq.IHOCP}, it is well known that the infinite horizon cost-to-go $J_{\infty}(\cdot)$ satisfies Bellman's equation \cite[Ch.7]{bertsekas_vol1}:
\begin{align}
    J_{\infty}^*(x) = \min_{u} \{ c(x, u) + J_{\infty}^*(f(x, u)) \}. \label{eq.bellman}
\end{align}

Given that $c(x,u) > 0 ~\forall ~(x,u)\neq (0,0)$, one may see that $J^*_{\infty}(\cdot)$ is a Control Lyapunov Function (CLF) for the dynamical system in \cref{eq.dynamics}, and thus, the control feedback policy implicitly defined by the optimal cost-to-go function $J^*_{\infty}(\cdot)$, globally asymptotically stabilizes the dynamical system \cref{eq.dynamics}.  The proof is straightforward and shown below.
\begin{corollary} \label{corollary.2}
    Let $J^*_\infty(x)$ satisfy the Bellman equation \cref{eq.bellman}, then it is a control Lyapunov function for the system in \cref{eq.dynamics} that renders the origin globally asymptotically stable.
\end{corollary}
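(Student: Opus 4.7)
The strategy is the standard discrete-time Lyapunov argument, using Bellman's equation \cref{eq.bellman} as the workhorse for the decrease condition. I will verify the three Lyapunov properties of $J^*_\infty$ along the closed-loop trajectory induced by the minimizer $u^*(x) = \argmin_u \{c(x,u) + J^*_\infty(f(x,u))\}$.

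First, I would establish positive definiteness. Since $c(x,u) > 0$ for all $(x,u) \neq (0,0)$, any feasible control sequence starting from $x \neq 0$ incurs strictly positive stage cost at $t = 0$, so $J^*_\infty(x) > 0$. Moreover $f(0,0) = 0$ (implicit in the problem) and $c(0,0) = 0$ imply $J^*_\infty(0) = 0$, attained by the zero control sequence. Radial unboundedness, which is needed for \emph{global} asymptotic stability, I would obtain from the natural assumption that $c(x,u) \geq \alpha(\|x\|)$ for some class-$\mathcal{K}_\infty$ function $\alpha$ (a mild strengthening of the positive definiteness of the stage cost), giving $J^*_\infty(x) \geq \alpha(\|x\|)$.

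Next, I would establish the decrease property. Let $x_{t+1} = f(x_t, u^*(x_t))$ denote the closed-loop trajectory. Substituting the minimizer into \cref{eq.bellman} gives
\begin{equation}
J^*_\infty(x_t) = c(x_t, u^*(x_t)) + J^*_\infty(x_{t+1}),
\end{equation}
so that
\begin{equation}
J^*_\infty(x_{t+1}) - J^*_\infty(x_t) = -c(x_t, u^*(x_t)) \leq 0,
\end{equation}
with strict inequality whenever $x_t \neq 0$ (since then $(x_t, u^*(x_t)) \neq (0,0)$, as $u^*(x_t) = 0$ combined with $x_t \neq 0$ still gives $c(x_t, 0) > 0$). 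Thus $J^*_\infty$ is strictly decreasing along nontrivial trajectories. Combined with positive definiteness and radial unboundedness, the standard discrete-time Lyapunov theorem yields global asymptotic stability of the origin under the feedback $u^*(\cdot)$.

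\textbf{Main obstacle.} The step that requires the most care is the decrease inequality at $x_t \neq 0$: one must rule out the degenerate case $c(x_t, u^*(x_t)) = 0$. This is where the hypothesis $c(x,u) > 0$ for all $(x,u) \neq (0,0)$ is essential, and where an implicit well-posedness assumption (existence of the minimizer in \cref{eq.bellman} and finiteness of $J^*_\infty$ on the relevant set) is being invoked. The remainder is bookkeeping on the Lyapunov properties; the Bellman equation does the substantive work.
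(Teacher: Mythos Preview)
Your proposal is correct and follows essentially the same route as the paper: substitute the Bellman minimizer into \cref{eq.bellman} to obtain $J^*_\infty(x_{t+1}) - J^*_\infty(x_t) = -c(x_t,u^*_t) < 0$ for $x_t \neq 0$, then conclude asymptotic stability. The paper argues convergence directly from the monotone bounded sequence $\{J^*_\infty(x_t)\}$ rather than invoking a named Lyapunov theorem, and it does not explicitly verify radial unboundedness as you do; your treatment of the side conditions is therefore slightly more careful, but the substantive step is identical.
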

\begin{proof}
    Let $u^*_t$ be the control action at state $x_t$ according to the Bellman equation, where $(x_t,u_t^*)$ is the system trajectory given $x_0 = x$. Then: 
    \begin{align*}
        J^*_\infty(x_t) &= c(x_t,u_t^*) + J^*_\infty(f(x_t,u^*_t)).\\
        \Rightarrow J^*_\infty(x_t) &> J^*_\infty(x_{t+1}) ~~(\text{since} ~c(x_t, u_t^*) >0).\\
        \Rightarrow J^*_\infty(x_t)&\to 0 ~\text{as} ~ t\to \infty ~~,
   \end{align*}
     where the last line follows from the fact that $J_{\infty}^* (x_t)$ is bounded below by zero, and thus, strict monotonicity of the sequence $\{J^*_{\infty}(x_t)\}$ implies that the sequence has to converge to zero. This holds for any choice of $x$. Since $J_{\infty}^*(x') = 0$ only at the origin, this implies the trajectory $x_t \rightarrow 0$ as $t\rightarrow \infty$ and the result follows.
\end{proof}

Further, suppose that if there exists a $J_{\infty}(\cdot)$ such that it satisfies the Bellman eq. (not necessarily optimal)
\begin{align}
    J_{\infty}(x) = \min_{u} \{ c(x, u) + J_{\infty}(f(x, u)) \}, \label{eq.suboptimal_bellman}
\end{align}
then $J_{\infty}(\cdot)$ also is a CLF that renders the origin globally asymptotically stable (GAS).

Thus, another goal for us in solving \cref{eq.IHOCP} is to construct CLFs as in \cref{eq.bellman}/ \cref{eq.suboptimal_bellman}, such that they render the origin GAS.

\section{Solution to the Infinite Horizon Optimal Control Problem}\label{section:sol_IHOCP}
Define the following finite-horizon optimal control problem (FH-OCP):
\begin{align}\label{eq.FHOCP}
    J^T_\infty(x) &= \min_{\{u_t\}}  \sum_{t=0}^{T-1} c(x_t,u_t) +\bar{J}_\infty(x_T),\\
    \text{subject to:} &~ x_{t+1} = f(x_t, u_t) \nonumber
\end{align}
where $\bar{J}_\infty(\cdot)$ is a terminal cost function that shall be defined below. 

We shall make the following assumptions for the rest of the paper. 
\begin{assumption}\label{assump.1 cost}
We assume that the cost function $c(x,u)$ has a global minimum at $(x,u) = (0,0)$, i.e., $\frac{\partial c}{\partial x} \Bigr|_{x=0, u=0} = 0$ and $\frac{\partial c}{\partial u}\Bigr|_{x=0, u=0} = 0$, $c(0,0) = 0$, and $c(x,u) > 0$ $\forall ~(x,u)\neq(0,0)$.
\end{assumption}
\begin{assumption}\label{assump.2 controllability}
    We assume that given any $x\in \mathcal{X}$, and any $\Omega \subset \mathcal{X}$, such that the origin is in $\Omega$, $\exists$ a control sequence $\{u_t\}_{t=0}^{T(x)}$,  that ensures $x_{T(x)} \in \Omega$ for some $T(x) < \infty$, under the dynamics defined above (\cref{eq.dynamics}). 
\end{assumption}
Assumption~\ref{assump.2 controllability} is a controllability assumption that ensures that any state can be controlled into entering the region $\Omega$ in finite time. 
\begin{assumption}\label{assump.3 linear controllability}
    We assume that the linearization of the dynamical system \cref{eq.dynamics} around $(0,0)$, is controllable.\\
\end{assumption}

Given assumption~\ref{assump.1 cost} and \ref{assump.3 linear controllability}, we can define the optimal ``linear" infinite-horizon problem:
\begin{subequations}\label{eq.terminal_lqr}
\begin{align}
    \bar{J}_\infty(x) &= \min_{\{u_t \}} \sum_{t=0}^{\infty} (x_t' Q x_t + u_t' R u_t), \label{eq.quad_cost} \\
    \text{subject to:} ~& x_{t+1} = A x_t + B u_t, 
\end{align}
\end{subequations}
where, $(Q,R)$ and $(A,B)$ are obtained by performing a quadratic expansion of $c(x,u)$, and a linear expansion of the dynamics in \cref{eq.dynamics} around the origin $(x,u) = (0,0)$.

Note that owing to the linear controllability assumption \ref{assump.3 linear controllability}, $\bar{J}_\infty(\cdot)$ above may be found by solving the stationary algebraic Riccati equation, resulting in 
\begin{align}
     \bar{J}_\infty(x) = x' P_{\infty} x,
\end{align}
where $ P_{\infty} $ is the solution of the stationary Riccati equation.
\begin{definition}
The terminal cost $\bar{J}_\infty(\cdot)$ in the finite horizon optimal control problem (\cref{eq.FHOCP}) above is defined as $\bar{J}_\infty(x) = x' P_{\infty} x,$ where $P_{\infty}$ is the stationary Riccati equation's solution.
\end{definition}
\subsection{Heuristic Idea}
The heuristic idea behind the solution of the infinite horizon optimal control problem is as follows. Let the optimal trajectory be given by $(x_t^*,u_t^*)$ starting at some state $x$. For the optimal cost to be well defined, we need that $\sum_{t=0}^{\infty} c(x_t^*,u_t^*) < \infty$. However, for this infinite sum to be well defined, the tail sum $\sum_{t>T}^{\infty} c(x_t^*,u_t^*) \rightarrow 0$ as $T \rightarrow \infty$. Thus, after some finite time, the cost $c(x_t^*,u_t^*)$ is necessarily small. Given that due to assumption \ref{assump.1 cost} the cost $c(x,u) = 0$ only at the origin, it follows that the system spends a very large time around the origin. Next, given assumption \ref{assump.3 linear controllability} that the linearization around the origin is controllable, it follows that the tail sum of the cost can be well approximated by the optimal linear cost function found by solving the stationary Riccati equation, $\bar{J}_{\infty}$, as defined above. Thus, the infinite horizon cost may be split into two parts: a ``transfer cost" to get the system into the region $\Omega$ where the linear approximation holds, and a linear regulation cost for regulation to the origin  once within $\Omega$ (see Fig. \ref{inf_horizon}).  Thus, the basic idea is to turn the infinite horizon problem into a finite horizon problem by approximating the tail sum of the cost by the optimal linear cost function and find the optimal insertion time $T^*(x)$ from state $x$ into the region $\Omega$ along with the associated optimal control. As the region $\Omega$ gets smaller, we get better approximations of the optimal cost function and obtain the optimum in the limit. However, even for a finite $\Omega$, this procedure results in the construction of a CLF that renders the origin globally asymptotically stable. In fact, we can get an uncountable number of such CLFs based on the choice of our incremental cost function $c(x,u)$.

\begin{figure}[t]
    \centering
    \includegraphics[width = \linewidth]{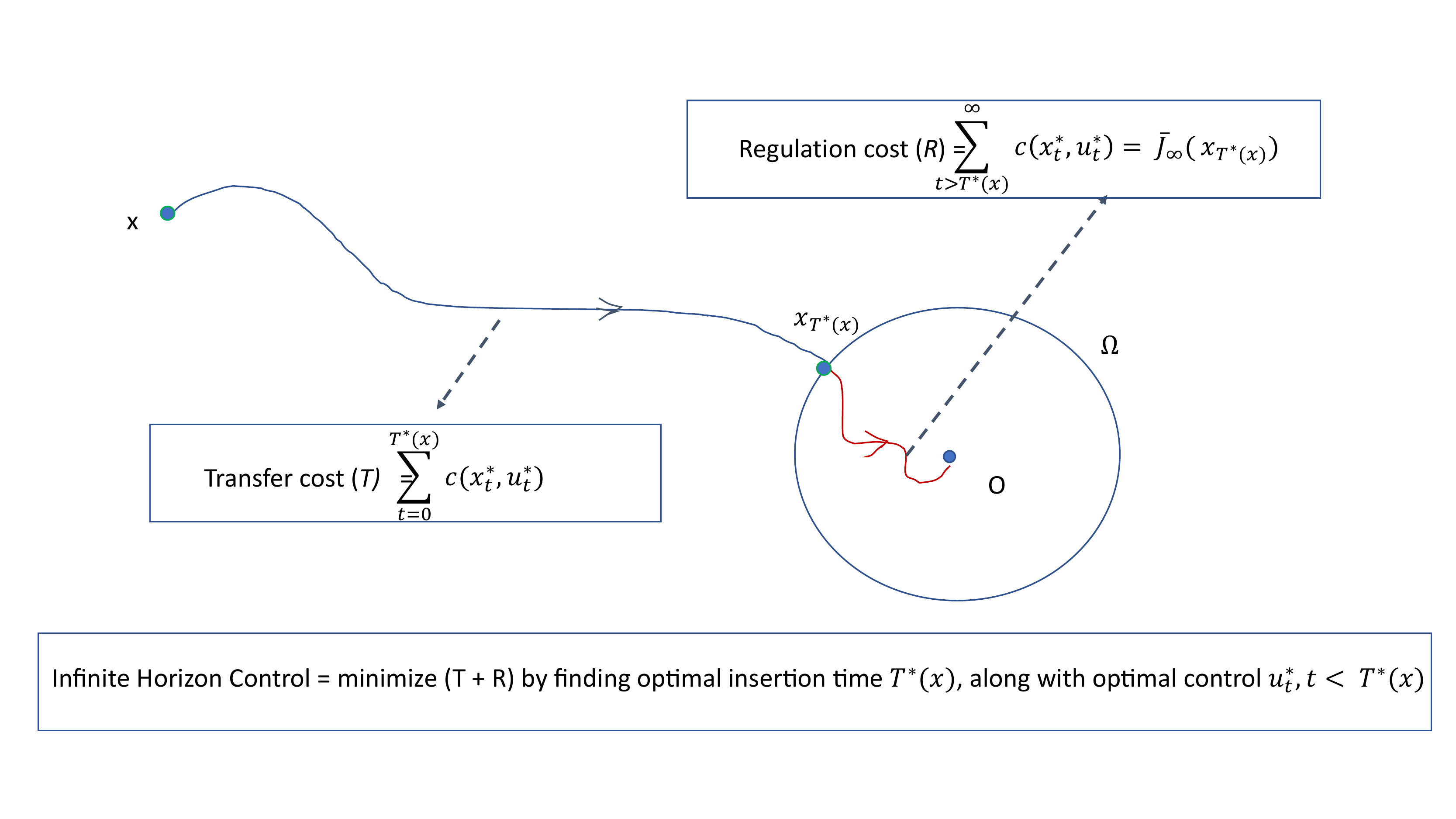}
    \caption{Schematic illustrating the strategy to solve the infinite horizon optimal control problem}
    \label{inf_horizon}
\end{figure}

\subsection{Optimality of the Finite Horizon Optimal Control Problem}
In the following, we shall establish the optimality of the FH-OCP \cref{eq.FHOCP}, in particular, that $J^T_\infty(x) \rightarrow {J}^*_\infty(x)$ for all $x\in \mathcal{X}$.

To this end, define the set:

    $\Omega_\varepsilon = \{x: |\bar{J}_\infty(x)- {J}^*_\infty(x)| < \varepsilon,$ and   $\bar{J}_\infty(x) = x' P_{\infty} x \leq M_\varepsilon\}$, i.e., $M_{\epsilon}$ is chosen such that in the level set $\Omega_{\varepsilon}$, the error between the linear optimal cost function and the true optimal cost function is less than the tolerance $\epsilon$.

The set above defines a sub-level set of the optimal linear cost function in which the cost function is $\varepsilon-$close to the true (unknown) optimum $J^*_{\infty}(x)$. 

For simplicity, we will consider the cost $c(x,u)$ to be quadratic in the following.
\begin{lemma}\label{lemma.1}
    There exists a finite time $T_{\varepsilon} < \infty$, such that the solution to the FH-OCP \cref{eq.FHOCP} for $T = T_\varepsilon$, denoted by $(x_t^\varepsilon, u_t^\varepsilon)$ is such that $\bar{J}_{\infty}(x_T) = x^{\varepsilon '}_T P_\infty x^\varepsilon_T \leq M_\varepsilon$ for the first time, i.e., for any $T<T_\varepsilon$,  $\bar{J}_{\infty}(x_T) = x^{\varepsilon '}_T P_\infty x^\varepsilon_T > M_\varepsilon$.
\end{lemma}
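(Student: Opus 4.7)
The plan is to combine Assumption~\ref{assump.2 controllability} with a minimization over horizons to locate a finite $T_\varepsilon$ beyond which the optimal FH-OCP terminal state is forced into $\Omega_\varepsilon$. First, I would apply Assumption~\ref{assump.2 controllability} with $\Omega = \Omega_\varepsilon$ to produce a control sequence $\{\tilde{u}_t\}_{t=0}^{\tilde{T}-1}$ driving $x_0 = x$ to some $\tilde{x}_{\tilde{T}} \in \Omega_\varepsilon$ in finite time $\tilde{T}$. Plugging this into \cref{eq.FHOCP} with horizon $\tilde{T}$ gives a feasible solution whose total cost is upper bounded by $\tilde{C} + M_\varepsilon$, where $\tilde{C} = \sum_{t=0}^{\tilde{T}-1} c(\tilde{x}_t, \tilde{u}_t)$, and in particular $J^{\tilde{T}}_\infty(x) < \infty$.

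Next, to upgrade this from a statement about a feasible trajectory to one about the optimum, I would extend the construction to horizons $T \geq \tilde{T}$ by appending the stationary linear feedback derived from $P_\infty$ (available by Assumption~\ref{assump.3 linear controllability}) after time $\tilde{T}$. Since inside $\Omega_\varepsilon$ the linearization is by construction a good approximation of the true dynamics and $\bar{J}_\infty$ is a Lyapunov function for the linear closed loop, this augmented trajectory remains in $\Omega_\varepsilon$ and its terminal state $x_T$ satisfies $\bar{J}_\infty(x_T) \to 0$ as $T \to \infty$. I would then define $T_\varepsilon$ as the smallest horizon for which the FH-OCP optimum itself satisfies $\bar{J}_\infty(x_{T_\varepsilon}^\varepsilon) \leq M_\varepsilon$; finiteness of $T_\varepsilon$ would follow by comparing the FH-OCP optimum with the explicit tail-regulated feasible trajectory and using Assumption~\ref{assump.1 cost} so that positive running costs prevent the optimizer from carrying a large terminal state at low total cost. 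The ``for the first time'' clause is then immediate from the minimality of this $T_\varepsilon$.

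The main obstacle is precisely the passage from a bound on the total FH-OCP cost to a bound on the terminal cost term alone, since the optimizer could in principle accept a larger $\bar{J}_\infty(x_T^\varepsilon)$ in exchange for a reduced transfer cost. I expect to close this gap by contradiction: if $\bar{J}_\infty(x_T^\varepsilon) > M_\varepsilon$ persisted for arbitrarily large $T$, then the tail-regulated feasible trajectory (whose terminal cost tends to zero while the transfer cost stays uniformly bounded by $\tilde{C}$) would strictly beat the purported optimum once $T$ is large, violating optimality. The argument uses all three assumptions: Assumption~\ref{assump.2 controllability} for the uniform transfer bound, Assumption~\ref{assump.3 linear controllability} to manufacture the asymptotically stable tail, and Assumption~\ref{assump.1 cost} to ensure the running cost strictly penalizes states away from the origin.
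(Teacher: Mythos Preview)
Your overall strategy matches the paper's: use Assumption~\ref{assump.2 controllability} to manufacture a feasible trajectory into $\Omega_\varepsilon$ in finite time, extend it with the linear regulator furnished by Assumption~\ref{assump.3 linear controllability}, and then argue by contradiction that the FH-OCP optimizer must eventually terminate inside $\Omega_\varepsilon$. You also correctly identify the main obstacle.

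However, your proposed resolution of that obstacle does not close. Under the contradiction hypothesis $\bar{J}_\infty(x_T^\varepsilon)>M_\varepsilon$ for all $T$, the only lower bound on $J^T_\infty(x)$ you actually extract is the terminal term, i.e.\ $J^T_\infty(x)>M_\varepsilon$. On the other side, your tail-regulated feasible trajectory yields an upper bound of roughly $\tilde{C}$ plus a bounded regulation tail. Since $M_\varepsilon$ is the threshold of a \emph{small} sublevel set while $\tilde{C}$ is the (possibly large) transfer cost from $x$, one typically has $M_\varepsilon\ll\tilde{C}$, so these two bounds are perfectly compatible and no contradiction follows. Claiming the feasible trajectory ``strictly beats the purported optimum'' is impossible by the definition of optimum unless you first force $J^T_\infty(x)$ above the feasible bound.

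The paper supplies exactly this missing step: by Assumption~\ref{assump.1 cost} there is a uniform lower bound $c(x,u)\geq\Delta>0$ whenever $x\notin\Omega_\varepsilon$, so if the horizon-$T$ optimal trajectory avoids $\Omega_\varepsilon$ its running cost alone satisfies $\sum_{t=0}^{T-1}c(x_t^\varepsilon,u_t^\varepsilon)\geq T\Delta$, and hence $J^T_\infty(x)\to\infty$ as $T\to\infty$. One then picks $T$ large enough that $J^T_\infty(x)$ exceeds the fixed feasible cost $\bar{J}$, producing the contradiction. Your appeal to Assumption~\ref{assump.1 cost} (``positive running costs prevent the optimizer from carrying a large terminal state at low total cost'') gestures in the right direction, but the divergence $J^T_\infty(x)\to\infty$ is the concrete mechanism you need to make explicit; without it the comparison against the bounded feasible cost does nothing.
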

\begin{proof}
    If $T_\varepsilon$ is finite, then by definition, $\bar{J}_\infty (x^\varepsilon_{T_\varepsilon}) \leq M_\varepsilon$ for the first time at $T_\varepsilon$. Owing to assumption \ref{assump.2 controllability}, there exits a control sequence $\{u_t \}_{t=0}^{\bar{T}}$ such that $\bar{x}_{\bar{T}} \in \Omega_\varepsilon$ after some finite time $\bar{T}$. Denote this cost by $\bar{J}$. Since the solution for \cref{eq.FHOCP} does not enter $\Omega_\varepsilon$ for any $T$, and since the cost $c(x,u) \geq \Delta > 0$, if $x \notin \Omega_\varepsilon$ (owing to assumption \ref{assump.1 cost}), $J^T_\infty \rightarrow \infty$ as $ T \rightarrow \infty$. However, owing to the trajectory $(\bar{x}_t, \bar{u}_t)$ from above: $\bar{J}(x) = \sum_{t=0}^{\bar{T}-1} c(\bar{x}_t, \bar{u}_t) + \sum_{t \geq \bar{T}} c(\bar{x}_t, \bar{u}_t),$ where $\bar{T}$ is the time the trajectory hits $\Omega_\varepsilon$ with $\bar{u}_t$ following the linear control after $\bar{T}$. Choose a $T$ such that $\forall ~T > T'$, $J^T(x) > \bar{J}(x) + \varepsilon$ (this is always possible since $J^T(x) \rightarrow \infty$). Now, choose a $T$ such that $T > \bar{T}$ and $T > T'$. This implies: $J^T_{\infty}(x) = \sum_{t=0}^{T-1} c(x_t, u_t) + \bar{J}_\infty(x_T) > \sum_{t=0}^{T'-1} c(\bar{x}_t, \bar{u}_t) + \sum_{t \geq T'} c(\bar{x}_t, \bar{u}_t) + \varepsilon$.
    But  $\sum_{t \geq T} c(\bar{x}_t, \bar{u}_t) > J^*_\infty(\bar{x}_T) \geq \bar{J}_\infty(\bar{x}_T) - \varepsilon.$\\
    $\Rightarrow J^T_\infty(x) = \sum_{t=0}^{T-1} c(x_t, u_t) + \bar{J}_\infty (x_T) > \sum_{t=0}^{T-1} c(\bar{x}_t, \bar{u}_t) + \bar{J}_\infty (\bar{x}_T),$ which contradicts the fact that $J^T_\infty(x)$ is the optimal cost for the time $T$. Thus, $\exists$ a finite time $T_\varepsilon < \infty$ such that the trajectory under the solution to \cref{eq.FHOCP} hits the set $\Omega_\varepsilon.$
\end{proof}

The basic idea in the proof above is that if the optimal trajectory never enters $\Omega_\varepsilon, ~J^T \rightarrow \infty$ starting from $x$, however, since there is a feasible trajectory with $\bar{J} < \infty,$ we can always find a suitable finite time $T$ where the cost of the optimal control is above $\bar{J}$ thereby contradicting optimality at that time $T$.\\

\begin{lemma} \label{lemma.2}
    Given the time $T_\varepsilon$ that the solution to \cref {eq.FHOCP} first hits the set $\Omega_\varepsilon$, the optimal solution $T_\varepsilon$ satisfies: $| J^{T_\varepsilon}_\infty(x) - J^*_\infty(x)| \leq \varepsilon$.
\end{lemma}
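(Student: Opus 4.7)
The plan is to show $|J^{T_\varepsilon}_\infty(x) - J^*_\infty(x)| \le \varepsilon$ by proving the two inequalities separately, in each case via a principle-of-optimality comparison that exploits the defining property of $\Omega_\varepsilon$: $|\bar J_\infty(y) - J^*_\infty(y)| < \varepsilon$ for every $y \in \Omega_\varepsilon$.

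For the lower bound $J^{T_\varepsilon}_\infty(x) \ge J^*_\infty(x) - \varepsilon$, I would begin from the identity $J^{T_\varepsilon}_\infty(x) = \sum_{t=0}^{T_\varepsilon-1} c(x^\varepsilon_t, u^\varepsilon_t) + \bar J_\infty(x^\varepsilon_{T_\varepsilon})$. Lemma~\ref{lemma.1} places the terminal state $x^\varepsilon_{T_\varepsilon}$ inside $\Omega_\varepsilon$, so $\bar J_\infty(x^\varepsilon_{T_\varepsilon}) \ge J^*_\infty(x^\varepsilon_{T_\varepsilon}) - \varepsilon$. Substituting and noting that the FH-OCP controls $(u^\varepsilon_t)_{t<T_\varepsilon}$ concatenated with the true IH-optimal tail emanating from $x^\varepsilon_{T_\varepsilon}$ form a feasible infinite-horizon control sequence from $x$ (hence of total cost at least $J^*_\infty(x)$) closes this direction immediately.

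For the upper bound $J^{T_\varepsilon}_\infty(x) \le J^*_\infty(x) + \varepsilon$, the natural candidate is the true IH-optimum $(x^*_t, u^*_t)$ truncated at $T_\varepsilon$ and fed as a feasible FH-OCP solution, which gives
\begin{align*}
J^{T_\varepsilon}_\infty(x) &\le \sum_{t=0}^{T_\varepsilon-1} c(x^*_t, u^*_t) + \bar J_\infty(x^*_{T_\varepsilon}) \\
&= J^*_\infty(x) + \big[\bar J_\infty(x^*_{T_\varepsilon}) - J^*_\infty(x^*_{T_\varepsilon})\big].
\end{align*}
The inequality then reduces to establishing $x^*_{T_\varepsilon} \in \Omega_\varepsilon$, which bounds the bracketed term by $\varepsilon$ in absolute value.

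I expect this last invariance step to be the main obstacle. My plan is to compare $T_\varepsilon$ with the first entry time $T^o$ of the IH-optimum into $\Omega_\varepsilon$---finite by Corollary~\ref{corollary.2} since $x^*_t \to 0$---and to combine the strict descent of $J^*_\infty$ along the IH-optimum with the sublevel-set structure of $\Omega_\varepsilon$ to argue that $x^*_t$ stays in $\Omega_\varepsilon$ for all $t \ge T^o$. Feeding the truncated IH-optimum into the FH-OCP at horizon $T^o$ yields $J^{T^o}_\infty(x) \le J^*_\infty(x) + \varepsilon$, and the minimality built into Lemma~\ref{lemma.1}'s definition of $T_\varepsilon$ is then intended to place $T_\varepsilon$ in the right range so that $x^*_{T_\varepsilon}$ is already inside $\Omega_\varepsilon$. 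If this direct comparison turns out to be delicate (for instance if $T_\varepsilon < T^o$), a clean fallback is to use a hybrid candidate that follows the IH-optimum up to $T^o$ and then applies the linear feedback out to time $T_\varepsilon$, absorbing any remaining mismatch into an $O(\varepsilon)$ additive error by the Lyapunov property of $\bar J_\infty$ on $\Omega_\varepsilon$. Either route yields the desired upper bound and completes the proof.
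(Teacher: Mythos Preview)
Your lower-bound direction is correct and in fact cleaner than the paper's treatment: concatenating the FH-OCP segment with the true IH-optimal tail from $x^\varepsilon_{T_\varepsilon}$ is exactly the right comparison, and it uses only $x^\varepsilon_{T_\varepsilon}\in\Omega_\varepsilon$, which Lemma~\ref{lemma.1} supplies.

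The upper bound, however, has a genuine gap. You need $x^*_{T_\varepsilon}\in\Omega_\varepsilon$, and your plan rests on two unsupported steps. First, the claimed invariance of $\Omega_\varepsilon$ under the IH-optimal flow: $\Omega_\varepsilon$ is a sublevel set of $\bar J_\infty$, not of $J^*_\infty$, and while $J^*_\infty$ decreases strictly along $(x^*_t)$, nothing forces $\bar J_\infty$ to do so, so the IH-optimal trajectory may exit $\Omega_\varepsilon$ after first entering. Second, and more seriously, your fallback for the case $T_\varepsilon<T^o$ is incoherent as written: you cannot ``follow the IH-optimum up to $T^o$ and then apply the linear feedback out to time $T_\varepsilon$'' when $T_\varepsilon<T^o$. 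This is precisely the hard case, since if the IH-optimal trajectory has not yet reached $\Omega_\varepsilon$ at time $T_\varepsilon$, truncating it there gives no control over $\bar J_\infty(x^*_{T_\varepsilon})-J^*_\infty(x^*_{T_\varepsilon})$, and monotonicity (Corollary~\ref{corollary.1}) points the wrong way: $J^{T_\varepsilon}_\infty(x)\ge J^{T^o}_\infty(x)$.

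The paper handles this with an explicit case split on whether the IH-optimal hitting time $T^*_\varepsilon$ (your $T^o$) is larger or smaller than $T_\varepsilon$, and exploits a fact you do not use: both $x^\varepsilon_{T_\varepsilon}$ and $x^*_{T^*_\varepsilon}$ lie on $\partial\Omega_\varepsilon$, where $\bar J_\infty$ takes the common value $M_\varepsilon$. In the case $T^*_\varepsilon>T_\varepsilon$ this lets the truncated IH-optimal cost be compared directly to $J^{T_\varepsilon}_\infty$. In the case $T^*_\varepsilon<T_\varepsilon$ the paper extends the horizon-$T^*_\varepsilon$ FH-OCP solution by the linear feedback law to produce a feasible candidate at horizon $T_\varepsilon$; this is essentially your hybrid idea, but deployed in the direction where it makes sense. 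To repair your argument you should adopt a similar case distinction, or else supply an independent reason why $T_\varepsilon\ge T^o$ always holds---which the paper does not assert and which is not true in general.
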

\begin{proof}
    Let $(x^*_t, u^*_t)$ denote the optimal trajectory and let $T^*_\varepsilon$ be the time it hits $\Omega_\varepsilon$. There are three possibilities: $T^*_\varepsilon > T_\varepsilon, ~T^*_\varepsilon < T_\varepsilon, ~\text{and} ~T^*_\varepsilon = T_\varepsilon$.\\
    
    (a) $T^*_\varepsilon > T_\varepsilon$: Let $ J^*_\infty(x) = \sum_{t=0}^{T^*_\varepsilon - 1} c(x^*_t, u^*_t) + J^*_\infty(x^*_{T^*_\varepsilon})$.\\
    $J^{T_\varepsilon}_\infty(x) = \sum_{t=0}^{T_\varepsilon - 1} c(x^\varepsilon_t, u^\varepsilon_t) + \bar{J}_\infty(x^\varepsilon_{T_\varepsilon}),$ where $(x^\varepsilon_t, u^\varepsilon_t)$ is the optimal trajectory from solving \cref{eq.FHOCP} for time $T_\varepsilon$ and finally let $\bar{J}^*_\infty(x) = \sum_{t=0}^{T^*_\varepsilon - 1} c(x^*_t, u^*_t) + \bar{J}^*_\infty(x^*_{T^*_\varepsilon}).$ \\
    But owing to the defintion of $\Omega_\varepsilon$, $|\bar{J}^*_\infty(x) - J^*_\infty(x)| \leq \varepsilon.$ But $\bar{J}^*_\infty(x) = \sum_{t=0}^{T_\varepsilon -1} c(x^*_t, u^*_t) + \sum_{t>T_\varepsilon}^{T^*_\varepsilon} c(x^*_t, u^*_t) + \bar{J}_\infty(x^*_{T^*_\varepsilon}).$ Noting that $x^*_{T^*_\varepsilon}$ and $x^{\varepsilon}_{T_\varepsilon}$ are both on $\partial \Omega_\varepsilon$ (boundary of the set $\Omega_\varepsilon$) and noting that $J^{T_\varepsilon}_\infty(x)$ is optimum for $T_\varepsilon$ in \cref{eq.FHOCP}, it follows that $J^{T_\varepsilon}_\infty(x) < \bar{J}^*_\infty(x) \Rightarrow |J^{T_\varepsilon}_\infty(x) - \bar{J}^*_\infty(x)| < \varepsilon.$ \\

    (b) $T^*_\varepsilon < T_\varepsilon $: Let the solution to the FH-OCP for $T^*_\varepsilon$ be $(\tilde{x}_t, \tilde{u}_t)$, and define:
    \begin{align*}
        J^*_\infty(x) &= \sum_{t=0}^{T^*_\varepsilon - 1} c(x^*_t, u^*_t) + J^*_\infty(x^*_{T^*_\varepsilon}). \\
        J^{T_\varepsilon}_\infty(x) &= \sum_{t=0}^{T^*_\varepsilon -1} c(x^\varepsilon_t, u^\varepsilon_t) +\sum_{t \geq T^*_\varepsilon}^{T_\varepsilon - 1} c(x^\varepsilon_t, u^\varepsilon_t) + \bar{J}_\infty(x^\varepsilon_{T_\varepsilon}).\\
        J^{T^*_\varepsilon}_\infty (x) &= \sum_{t=0}^{T^*_\varepsilon - 1} c(\tilde{x}_t, \tilde{u}_t) + \bar{J}_\infty(\tilde{x}_{T^*_\varepsilon}).\\
        \bar{J}^*_\infty(x) &= \sum_{t=0}^{T^*_\varepsilon -1} c(x^*_t, u^*_t)   + \bar{J}_\infty(x^*_{T^*_\varepsilon})
    \end{align*}
Owing to the definition of $\Omega_\varepsilon$: 
\begin{align}
    |\bar{J}^*_\infty(x) - J^*_\infty(x)| < \varepsilon \label{eq.lemma2_proof i}
\end{align}
(note that $x^*_{T^*_\varepsilon} \in \partial \Omega_\varepsilon)$. But $J^{T^*_\varepsilon}_\infty(x)$ is the optimum to FH-OCP \cref{eq.FHOCP} for $T^*_\varepsilon \Rightarrow$
\begin{align}
    J^{T^*_\varepsilon}_\infty(x) < \bar{J}^*_\infty(x).
\end{align}
Finally: 
\begin{align}
    J^{T^*_\varepsilon}_\infty(x) = \sum_{t=0}^{T_\varepsilon- 1} c(\tilde{x}_t, \tilde{u}_t) + \bar{J}_\infty(\tilde{x}_{T_\varepsilon}),
\end{align}
where $\tilde{u}_t = u_l(\tilde{x}_t)~\forall~ t> T^*_\varepsilon,$ where $u_l(\cdot)$ denotes the linear control according to $\bar{J}_{\infty}(\cdot).$ Comparing this to $J^{T_\varepsilon}_\infty(x)$, and noting that $J^{T_\varepsilon}_\infty(x)$ is optimal for $T_\varepsilon \Rightarrow$
\begin{align}
    J^{T_\varepsilon}_\infty(x) \leq J^{T^*_\varepsilon}_\infty(x). \label{eq.lemma2_proof iv}
\end{align}
Putting \cref{eq.lemma2_proof i}-\cref{eq.lemma2_proof iv} together $\Rightarrow |J^*_\infty - J^{T_\varepsilon}_\infty(x)| \leq \varepsilon$. \\

(c) The case $T_\varepsilon = T^*_\varepsilon$ is trivial, and thus, the result is proved.
\end{proof}

\begin{corollary}\label{corollary.1}
Let $T'<T,$ then the solution to \cref{eq.FHOCP} for $T,~T'$ follow: $J^{T'}_\infty(x) \geq J^T_\infty(x).$  The proof follows from the last part of the proof to lemma \ref{lemma.2} above.     
\end{corollary}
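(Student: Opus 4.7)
My plan is to exploit the same extension-with-linear-feedback construction that appears at the end of the proof of Lemma~\ref{lemma.2}, now applied to the two horizons $T'<T$ rather than to $T_\varepsilon$ vs.\ $T^*_\varepsilon$. First, I would let $(x^{T'}_t,u^{T'}_t)_{t=0}^{T'-1}$ denote the optimizer of \cref{eq.FHOCP} at horizon $T'$, and I would then \emph{construct a feasible candidate} for the $T$-horizon problem by copying this trajectory on $[0,T'-1]$ and appending, for $t=T',\dots,T-1$, the linear feedback $u^{T'}_t = u_l(x^{T'}_t)$ derived from $\bar J_\infty$, with the states propagated by the nonlinear dynamics $f$. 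This yields an admissible control sequence of length $T$ whose associated cost upper-bounds $J^T_\infty(x)$ by the very definition of the minimum in \cref{eq.FHOCP}.

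Next, I would evaluate the cost of this candidate and identify it with $J^{T'}_\infty(x)$. The key identity is the Bellman relation for the stationary LQR problem that defines $\bar J_\infty$: for any state $z$ on which the linear controller $u_l$ is applied,
\begin{equation*}
\bar J_\infty(z) \;=\; c\bigl(z,u_l(z)\bigr) + \bar J_\infty\bigl(A z + B u_l(z)\bigr).
\end{equation*}
Iterating this identity $T-T'$ times starting from $z=x^{T'}_{T'}$ shows that the terminal cost $\bar J_\infty(x^{T'}_{T'})$ is exactly reproduced by the running costs incurred on $[T',T-1]$ plus $\bar J_\infty(x^{T'}_T)$, so the appended tail exactly substitutes for the former terminal cost. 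Therefore the total cost of the candidate collapses to $\sum_{t=0}^{T'-1} c(x^{T'}_t,u^{T'}_t) + \bar J_\infty(x^{T'}_{T'}) = J^{T'}_\infty(x)$, and the optimality of $J^T_\infty(x)$ delivers $J^T_\infty(x)\leq J^{T'}_\infty(x)$.

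The main obstacle is the one subtlety already glossed over in Lemma~\ref{lemma.2}: $\bar J_\infty$ is the Bellman cost of the \emph{linear} regulator, while the candidate trajectory is propagated by the true nonlinear $f$. The Bellman identity above is literally exact only on the linearization, and the equality I need is therefore really an equality only when $x^{T'}_{T'}$ lies in the terminal set where the linear approximation is governing, i.e.\ precisely the regime that $\Omega_\varepsilon$ is engineered to capture. Following the convention established in the proof of Lemma~\ref{lemma.2}, I would assert the identity in that regime and note that any residual linearization gap enters at the same $O(\varepsilon)$ order already absorbed there, so the monotonicity $J^{T'}_\infty(x)\geq J^T_\infty(x)$ stands. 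No further case analysis on whether the $T'$-trajectory has yet entered $\Omega_\varepsilon$ is needed, because the argument only uses that appending $u_l$ beyond time $T'$ is an admissible (not necessarily optimal) extension.
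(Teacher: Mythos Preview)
Your proposal is correct and follows essentially the same route as the paper: take the $T'$-optimal trajectory, append the linear feedback $u_l$ on $[T',T-1]$, invoke the stationary Bellman identity for $\bar J_\infty$ so that the appended tail exactly accounts for the terminal cost $\bar J_\infty(x^{T'}_{T'})$, and conclude $J^T_\infty(x)\leq J^{T'}_\infty(x)$ by optimality at horizon $T$. This is precisely the construction the paper points to when it says ``the proof follows from the last part of the proof to Lemma~\ref{lemma.2}.'' You are in fact more scrupulous than the paper in flagging that the Bellman identity for $\bar J_\infty$ is literally exact only under the linear dynamics, whereas the candidate must be propagated through $f$; the paper simply asserts the equality without comment, so your handling of that gap at the $O(\varepsilon)$ level is already at least as rigorous as the original.
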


\par
\begin{theorem}\label{theorem.1}
    Given any $x \in \mathcal{X}$, $\lim_{T\rightarrow \infty} J^T_\infty(x) \rightarrow J^*_\infty(x)$.
\end{theorem}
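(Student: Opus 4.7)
The plan is to sandwich $J^T_\infty(x)$ between bounds that both converge to $J^*_\infty(x)$, combining the monotonicity from Corollary~\ref{corollary.1} with the $\varepsilon$-approximation of Lemma~\ref{lemma.2}. First I would observe that, by Corollary~\ref{corollary.1}, the sequence $\{J^T_\infty(x)\}_{T \geq 1}$ is non-increasing, and because $c(x,u) \geq 0$ and $\bar{J}_\infty(\cdot) \geq 0$ it is bounded below by zero; hence $L(x) := \lim_{T\to\infty} J^T_\infty(x)$ exists by monotone convergence.

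Next, for the upper bound $L(x) \leq J^*_\infty(x)$, I would fix any $\varepsilon > 0$, invoke Lemma~\ref{lemma.1} to obtain a finite first-hitting time $T_\varepsilon$, and apply Lemma~\ref{lemma.2} to conclude $J^{T_\varepsilon}_\infty(x) \leq J^*_\infty(x) + \varepsilon$. Monotonicity then yields $L(x) \leq J^{T_\varepsilon}_\infty(x) \leq J^*_\infty(x) + \varepsilon$, and sending $\varepsilon \to 0$ gives $L(x) \leq J^*_\infty(x)$.

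For the lower bound $L(x) \geq J^*_\infty(x)$, I would pick a sequence $\varepsilon_n \downarrow 0$ with corresponding first-hitting times $T_n := T_{\varepsilon_n}$, so that Lemma~\ref{lemma.2} supplies $J^{T_n}_\infty(x) \geq J^*_\infty(x) - \varepsilon_n$. In the expected case $T_n \to \infty$, the subsequence $\{J^{T_n}_\infty(x)\}$ inherits the limit $L(x)$, and passing to the limit in the Lemma~\ref{lemma.2} inequality yields $L(x) \geq J^*_\infty(x)$. If instead $\{T_n\}$ is bounded, a subsequence stabilizes at some fixed $T^*$; the corresponding trajectory endpoint then lies in every $\Omega_{\varepsilon_{n_k}}$, and since these target sets shrink to $\{0\}$ as $\varepsilon_{n_k} \to 0$ (using that $\bar{J}_\infty$ and $J^*_\infty$ agree only at the origin by Assumption~\ref{assump.1 cost}), the endpoint equals the origin, forcing $J^T_\infty(x) = J^*_\infty(x)$ for all $T \geq T^*$.

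The hard part will be the lower-bound step. Lemma~\ref{lemma.1} only asserts finiteness of $T_\varepsilon$ for each $\varepsilon$ without quantifying its dependence on $\varepsilon$, so one must separately justify either the divergence $T_{\varepsilon_n} \to \infty$ (by arguing that shrinking the target $\Omega_\varepsilon$ towards the origin forces the first-hitting horizon to grow, given bounded per-step state transitions) or carefully handle the degenerate case, which itself requires $M_{\varepsilon_n} \to 0$ and relies on the continuity of $\bar{J}_\infty$ and $J^*_\infty$ together with Assumption~\ref{assump.1 cost} to conclude $\bigcap_n \Omega_{\varepsilon_n} = \{0\}$.
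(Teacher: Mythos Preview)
Your proposal uses the same two ingredients as the paper---Lemma~\ref{lemma.2} and the monotonicity of Corollary~\ref{corollary.1}---so the overall strategy coincides. The paper, however, dispatches the lower bound in one line: after recording $|J^{T_\varepsilon}_\infty(x)-J^*_\infty(x)|\le\varepsilon$ and $J^T_\infty(x)\le J^{T_\varepsilon}_\infty(x)$ for $T>T_\varepsilon$, it simply asserts $J^{T_\varepsilon}_\infty(x)>J^*_\infty(x)$ and concludes $|J^T_\infty(x)-J^*_\infty(x)|\le\varepsilon$ for all $T>T_\varepsilon$. Your sequence argument (take $\varepsilon_n\downarrow 0$, pass to the limit along $T_{\varepsilon_n}$, and treat the degenerate bounded-$T_n$ case separately) is a more careful substitute for that unproved inequality, since nothing in the setup guarantees $\bar J_\infty(x_T)\ge J^*_\infty(x_T)$ at the FH-OCP endpoint. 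What you gain is rigor at the cost of the extra case analysis; note that your bounded-$T_n$ branch really does hinge on $M_{\varepsilon_n}\to 0$ (so that $\bigcap_n\Omega_{\varepsilon_n}=\{0\}$), and even then the final claim ``$J^T_\infty(x)=J^*_\infty(x)$ for all $T\ge T^*$'' needs one more sentence---knowing the horizon-$T^*$ optimal endpoint is the origin gives $J^{T^*}_\infty(x)=J^*_\infty(x)$ via Lemma~\ref{lemma.2}, but monotonicity alone only yields $L(x)\le J^*_\infty(x)$, so you still need the Lemma~\ref{lemma.2} lower bound $J^{T^*}_\infty(x)\ge J^*_\infty(x)-\varepsilon_{n_k}$ (which you already have) to close the sandwich.
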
    
\begin{proof}
We just showed in lemma \ref{lemma.2} that given any $\varepsilon > 0$, we may find $T_\varepsilon(x) < \infty$ such that: 
\begin{align}
    |J^{T_\varepsilon}_\infty(x) - J^*_{\infty}(x)| \leq \varepsilon.
\end{align}
Further, owing to corollary \ref{corollary.1}, $J^{T}_\infty(x) \leq J^{T_\varepsilon}_\infty(x)$ for any $T > T_\varepsilon$. Noting that $J^{T_\varepsilon}_\infty(x) > J^*_\infty(x),$  it follows that $|J^T_\infty(x) - J^*_\infty(x)|\leq\varepsilon ~\forall~ T > T_\varepsilon(x)$, this establishes that $J^T_\infty(x) \rightarrow J^*_\infty(x)$ for any $x\in \mathcal{X}$.
\end{proof}

\subsection{An Alternative Construction}\label{section:suboptimal}
In the following, we define a sub-optimal construction that nonetheless furnishes a Control Lyapunov Function (CLF) that renders the origin globally asymptotically stable for the dynamical system \cref{eq.dynamics}.

Let the region of the attraction (ROA) of the linear controller corresponding to the terminal cost function $\bar{J}_\infty$ be $\mathcal{D}$. Suppose that we define a set $\Omega \subset \mathcal{D}$ as follows: $$\Omega = \{x:\bar{J}_\infty(x) = x' P_\infty x \leq M\},$$ which is a level set of $\bar{J}_\infty(\cdot)$ that is constrained in the ROA $\mathcal{D}$.  

Fix $M< \infty$, and consider the problem:
\begin{align}
    {J}_\infty(x) &= \min_{u_t, T} \sum_{t=0}^{T-1} c(x_t,u_t) +  \max \left(\bar{J}_\infty(x_T), M \right), \; x \notin \Omega, \nonumber\\
    {J}_\infty(x) &= \bar{J}_{\infty}(x) = x'P_{\infty} x, \; x \in \Omega.\label{eq:subopt}
\end{align}

Please note that the above optimization problem has a free final time $T$ that needs to be optimized over in conjunction with the control action $u_t$. The rationale behind posing the above problem is that the optimal control transfers the system from any state $x$ to the boundary $\partial\Omega$ of the set $\Omega$, from where the linear controller takes over to regulate to the origin. Since $\Omega$ is within the region of attraction $\mathcal{D}$, the linear controller can always regulate to the origin regardless of the insertion point on the boundary.\\

In the next result, we show that the optimal time for the problem posed above is the first time that the solution to the problem posed above hits the boundary of the set $\Omega$.
\begin{lemma}\label{lemma:subopt}
The optimal time $T^*(\Omega)$ for the optimal control problem (\cref{eq:subopt}) is finite and is the first time $T$ that the solution to \cref{eq:subopt} hits the set $\Omega$.
\end{lemma}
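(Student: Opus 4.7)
The plan is to prove the two parts of the lemma in sequence: (i) $T^*(\Omega)$ is finite, and (ii) it coincides with the first entry time of the optimal trajectory into $\Omega$.

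For (i), I would invoke Assumption 2 to produce a feasible control sequence that drives $x$ into $\Omega$ in some finite time $\bar{T}$, yielding the upper bound $J_\infty(x) \leq \sum_{t=0}^{\bar{T}-1} c(\bar{x}_t, \bar{u}_t) + M =: K < \infty$. Since $c$ is quadratic positive definite and $\Omega$ is a bounded neighborhood of the origin, $c(x_t, u_t) \geq \delta > 0$ uniformly for $x_t \notin \Omega$, so any candidate trajectory whose state stays outside $\Omega$ for $T$ consecutive steps incurs running cost at least $T\delta$, plus terminal cost at least $M$. Hence for any such candidate with $T > (K-M)/\delta$ the cost exceeds $K$ and fails optimality; this bounds finitely the horizon at which any optimal trajectory must have entered $\Omega$.

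For (ii), let $(T^*, \{u_t^*\})$ be optimal and let $T_1 \leq T^*$ denote the first time $x_t^* \in \Omega$. The truncated pair $(T_1, \{u_0^*, \ldots, u_{T_1-1}^*\})$ is feasible with cost $\sum_{t=0}^{T_1-1} c(x_t^*, u_t^*) + M$, because $x_{T_1}^* \in \Omega$ gives $\max(\bar{J}_\infty(x_{T_1}^*), M) = M$. This truncated cost is no larger than $J_\infty(x) = \sum_{t=0}^{T^*-1} c(x_t^*, u_t^*) + \max(\bar{J}_\infty(x_{T^*}^*), M)$, since the running costs on $[T_1, T^*-1]$ are nonnegative and the terminal is at least $M$. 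Optimality then forces equality, and strict positivity of $c$ outside the origin implies the trajectory must remain at the origin throughout $[T_1, T^*]$; hence one may take $T^* = T_1$, the first hit time.

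The main obstacle is ruling out the edge case in which the optimal trajectory never enters $\Omega$ within $[0, T^*]$, so that $x_{T^*}^* \notin \Omega$ and the terminal equals $\bar{J}_\infty(x_{T^*}^*) > M$. To derive a contradiction, I would extend the trajectory past $T^*$ via Assumption 2 applied at $x_{T^*}^*$, producing a concatenation that reaches $\Omega$ with terminal $M$ and additional running cost $C_{\text{ext}}$; optimality would then require $C_{\text{ext}} + M \geq \bar{J}_\infty(x_{T^*}^*)$. The essential non-trivial step is exhibiting an extension with $C_{\text{ext}} + M < \bar{J}_\infty(x_{T^*}^*)$: the natural choice is the linear controller itself, valid since $\Omega \subset \mathcal{D}$, for which the Bellman descent $\bar{J}_\infty(Ax + Bu_l) + c(x, u_l) = \bar{J}_\infty(x)$ holds on the linearization and the running cost to reach $\partial \Omega$ on the linear system equals $\bar{J}_\infty(x_{T^*}^*) - M$ exactly; transferring this estimate to the nonlinear system by standard local perturbation bounds closes the argument provided $x_{T^*}^*$ lies in $\mathcal{D}$.
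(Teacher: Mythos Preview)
Your Parts (i) and (ii) are sound and essentially re-derive, inline, the content of Lemma~\ref{lemma.1} together with a truncation argument. The paper's proof is more concise because it exploits those earlier results directly: it observes that whenever $x_T\notin\Omega$ one has $\max(\bar J_\infty(x_T),M)=\bar J_\infty(x_T)$, so problem~(\ref{eq:subopt}) coincides with the FH-OCP~(\ref{eq.FHOCP}) there; Lemma~\ref{lemma.1} then yields a finite first-hitting time $T(\Omega)$, and for any $T>T(\Omega)$ the terminal cost saturates at $M$, so the extra running cost makes the objective strictly worse. No extension of a candidate trajectory is ever needed.

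Your treatment of the ``main obstacle'' has a genuine gap. Applying the linear controller from $x_{T^*}^*$ requires $x_{T^*}^*\in\mathcal D$, but you only know $x_{T^*}^*\in\mathcal X\setminus\Omega$, which may lie far outside $\mathcal D$; the proviso you append at the end is exactly the missing hypothesis, not a technicality. Worse, even granting $x_{T^*}^*\in\mathcal D$, your Bellman identity on the linearization yields $C_{\text{ext}}+M=\bar J_\infty(x_{T^*}^*)$---an \emph{equality}, not the strict inequality $C_{\text{ext}}+M<\bar J_\infty(x_{T^*}^*)$ needed to contradict optimality---and the nonlinear perturbation has no a priori sign, so ``standard local perturbation bounds'' cannot close the argument. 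The cleaner fix is to abandon the extension idea and follow the paper's observation: since (\ref{eq:subopt}) and (\ref{eq.FHOCP}) agree whenever the terminal state is outside $\Omega$, minimizing over such horizons is exactly the FH-OCP family in $T$, and your own Part~(i) reasoning already forces its terminal into $\Omega$ at some finite horizon; at that horizon your Part~(ii) truncation applies and the obstacle disappears.
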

\begin{proof}
Note that \cref{eq:subopt} is the exact same as \cref{eq.FHOCP} as long as $x_T \notin \Omega$. Thus, \cref{lemma.1} implies that the first hitting time of $\Omega$, say $T^*(\Omega)$, is finite. 
Furthermore, \cref{corollary.1} implies that $J^T_\infty(x) < J^{T(\Omega)}_\infty(x)$ for any $T > T(\Omega)$, where:
\begin{align}
    J^T_\infty(x) = \min_{u_t}  \sum_{t=0}^{T-1} c(x_t,u_t) +\bar{J}_\infty(x_T).
\end{align}

Let $T>T(\Omega)$ and let the optimal trajectory be $\{\tilde{x}_t, \tilde{u}_t\}$ for the time $T$ according to the OCP in \cref{eq:subopt}. Then:
\begin{align}
    {J}^T_\infty(x) = \sum_{t=0}^{T-1} c(\tilde{x}_t,\tilde{u}_t) +  \max \left(\bar{J}_\infty(\tilde{x}_T), M \right). \label{eq:subopt1}
\end{align}
Since $T>T(\Omega)$, $\bar{J}_{\infty} (\tilde{x}_T)  < M $, and thus, $\max(\bar{J}_{\infty} (\tilde{x}_T), M) = M$. Noting that $T(\Omega)$ is the first time the solution to \cref{eq:subopt} hits $\partial \Omega$, $J^T_\infty(x) > J^{T(\Omega)}_\infty(x)$, and the result follows.
\end{proof}
Next, we show that the sub-optimal cost function thus constructed satisfies the Bellman equation.

\begin{lemma}
    The optimal cost $J_\infty(x)$ corresponding to OCP \cref{eq:subopt} satisfies the Bellman equation for any $x \notin \Omega$ :
    $$J_\infty(x) = \min_u [c(x,u) + J_\infty(f(x,u))].$$
\end{lemma}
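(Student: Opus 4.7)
The plan is a standard principle-of-optimality argument for free-terminal-time problems, made concrete by \cref{lemma:subopt}, which identifies the optimal horizon with the first hitting time of $\Omega$. With this identification, the OCP from any $x \notin \Omega$ becomes a well-defined finite-horizon problem whose optimizer can be decomposed into a first control action followed by an optimal tail, and the Bellman equation will then follow from this decomposition by proving the two inequalities separately.

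For the ``$\leq$'' direction I would fix an arbitrary $u$, set $y = f(x,u)$, and construct a feasible policy for \cref{eq:subopt} at $x$ by taking control $u$ at time $0$ and then applying an optimizer of \cref{eq:subopt} from $y$. If $y \in \Omega$, the horizon is $T=1$ and the terminal contribution collapses to $\bar{J}_\infty(y) = J_\infty(y)$ by the boundary definition. If $y \notin \Omega$, \cref{lemma:subopt} supplies a finite first hitting time from $y$, so the concatenated trajectory hits $\Omega$ one step later starting from $x$; its total OCP cost equals $c(x,u) + J_\infty(y)$. Since $J_\infty(x)$ is the infimum over all feasible OCP costs at $x$, in either case $J_\infty(x) \leq c(x,u) + J_\infty(y)$, and minimizing over $u$ gives the desired inequality.

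For the ``$\geq$'' direction let $(u_t^*, T^*)$ be an optimizer of \cref{eq:subopt} at $x$, so that by \cref{lemma:subopt} $T^*$ is the first hitting time of $\Omega$, and set $y = f(x, u_0^*)$. If $T^* = 1$ then $y \in \Omega$ and $J_\infty(x) = c(x, u_0^*) + J_\infty(y)$ directly from $J_\infty|_\Omega = \bar{J}_\infty$. Otherwise $T^* \geq 2$ and $y \notin \Omega$, so the shifted tail $(u_t^*)_{t=1}^{T^*-1}$ is a feasible control sequence for \cref{eq:subopt} at $y$ with first hitting time $T^*-1$; optimality of $J_\infty(y)$ then yields a tail cost of at least $J_\infty(y)$, and adding the running cost $c(x,u_0^*)$ gives $J_\infty(x) \geq c(x,u_0^*) + J_\infty(y) \geq \min_u[c(x,u) + J_\infty(f(x,u))]$.

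The main obstacle I anticipate is reconciling the $\max(\bar{J}_\infty(x_T), M)$ terminal penalty in \cref{eq:subopt} with the definition $J_\infty|_\Omega = \bar{J}_\infty$, since $\bar{J}_\infty(x_T)$ may be strictly below $M$ when the discrete-time trajectory overshoots into the interior of $\Omega$. The resolution is that \cref{lemma:subopt} forces the optimum to be attained at first entry into $\Omega$, where $\bar{J}_\infty(x_{T^*}) = M$ along $\partial\Omega$, so the $\max$ evaluates consistently with $J_\infty|_\Omega$ at the optimum; handling this matching and the one-step entry case $T^* = 1$ cleanly is the only non-routine bookkeeping in the argument.
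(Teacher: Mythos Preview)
Your approach is correct in spirit and rests on the same principle-of-optimality idea as the paper, but the paper's proof is considerably more direct: it never invokes \cref{lemma:subopt} or splits into two inequalities. Instead, writing $\Phi(x_T)=\max(\bar{J}_\infty(x_T),M)$, it simply peels off $u_0$ from the remaining decision variables,
\[
J_\infty(x)=\min_{u_0}\Bigl[c(x,u_0)+\min_{\{u_t\}_{t\ge 1},\,T}\Bigl\{\textstyle\sum_{t=1}^{T-1}c(x_t,u_t)+\Phi(x_T)\Bigr\}\Bigr],
\]
and identifies the inner minimization with $J_\infty(f(x,u_0))$ because the free final time makes the tail problem an exact time-shifted copy of the original. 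Your two-inequality, case-by-case version is logically equivalent but more laborious; what it buys you is that it surfaces the boundary-matching issue you flag in your last paragraph, which the paper's one-line identification silently elides when $f(x,u_0)\in\Omega$. Note, however, that your proposed resolution does not actually close that gap for the ``$\le$'' direction: if $y=f(x,u)$ lands strictly in the interior of $\Omega$, the one-step terminal contribution in \cref{eq:subopt} is $M$, not $\bar{J}_\infty(y)$, and \cref{lemma:subopt} --- which concerns only optimizers --- gives no leverage there. Both your argument and the paper's are complete only under the implicit idealization (natural in continuous time, approximate in discrete time) that first entry occurs on $\partial\Omega$.
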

\begin{proof} Let us denote the terminal cost function in the optimal control problem \cref{eq:subopt} as $\Phi(x_T)=\max \left(\bar{J}_\infty(x_T), M \right)$. Then for any $x \notin \Omega$:
\begin{align}
    {J}_\infty(x) &= \min_{u_t, T} \left[\sum_{t=0}^{T-1} c(x_t,u_t) +  \Phi(x_T)\right],\\
    \nonumber & = \min_{u_0} \min_{\{u_t\}_{t=1}^{T-1}, T} \left[c(x,u_0) +  \sum_{t=1}^{T-1} c(x_t,u_t) +  \Phi(x_T) \right],\\
    \nonumber & = \min_{u_0}  \left[c(x,u_0) + \min_{\{u_t\}_{t=1}^{T-1}, T} \sum_{t=1}^{T-1} \left[ c(x_t,u_t) +  \Phi(x_T) \right]  \right]. 
\end{align}

Noting that $x_1 = f(x,u_0)$ and $$ J_\infty(f(x,u_0)) = \min_{\{u_t\}_{t=1}^T, T} \sum_{t=1}^{T-1} \left[ c(x_t,u_t) +  \Phi(x_T) \right],$$ it follows that:
$$J_\infty(x) = \min_u [c(x,u) + J_\infty(f(x,u))],$$ establishing the result.\\
\end{proof}

Note that the reason $J_\infty(\cdot)$ satisfies the stationary Bellman equation is due to the free final time $T$ in the problem formulation as otherwise the sum would be time varying. Then, owing to the fact that $J_\infty(\cdot)$ satisfies the Bellman equation, one has:

\begin{corollary}
The optimal cost function $J_\infty(\cdot)$ of the OCP \cref{eq:subopt} is a control Lyapunov function that renders the origin of system \cref{eq.dynamics} global asymptotically stable.
\end{corollary}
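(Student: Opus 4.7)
My plan is to replicate the argument of Corollary~\ref{corollary.2} in the exterior of $\Omega$, where the Bellman equation just established holds, and then invoke standard linear stability inside $\Omega$ to close the argument. The first step is to verify positive definiteness: for $x \in \Omega$ we have $J_\infty(x) = x' P_\infty x$, which is positive definite since $P_\infty \succ 0$ from Assumption~\ref{assump.3 linear controllability}; for $x \notin \Omega$, the running cost satisfies $c \geq 0$ and the terminal term is $\max(\bar{J}_\infty(x_T), M) \geq M > 0$, so $J_\infty(x) > 0$. Together with $J_\infty(0) = 0$, this gives the Lyapunov positivity requirement.

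Next, I would establish the strict decrease of $J_\infty(x_t)$ along the closed-loop trajectory. For $x_t \notin \Omega$ (and hence $x_t \neq 0$), letting $u_t^*$ attain the minimum in the Bellman equation yields
\begin{align*}
J_\infty(x_t) = c(x_t, u_t^*) + J_\infty(f(x_t, u_t^*)) > J_\infty(x_{t+1}),
\end{align*}
using $c(x_t, u_t^*) > 0$ from Assumption~\ref{assump.1 cost}. Exactly as in Corollary~\ref{corollary.2}, this makes $\{J_\infty(x_t)\}$ a strictly decreasing, lower-bounded sequence while the trajectory remains outside $\Omega$. By Lemma~\ref{lemma:subopt}, the optimal (and therefore closed-loop) trajectory enters $\Omega$ at a finite first-hitting time $T^*(\Omega) < \infty$.

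Once $x_{T^*(\Omega)} \in \Omega$, the cost function coincides with the LQR value $x' P_\infty x$, and the associated feedback is precisely the stationary LQR gain derived from $P_\infty$. Since $\Omega$ is a sublevel set of $\bar{J}_\infty$ contained in the region of attraction $\mathcal{D}$, it is forward invariant under this linear feedback, and classical linear stability gives $x_t \to 0$ exponentially. Thus from any initial $x \in \mathcal{X}$, the trajectory reaches $\Omega$ in finite time and then converges to the origin, proving global asymptotic stability.

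The main subtlety I expect is the ``handoff'' at $\partial \Omega$: I must check that the control dictated by the Bellman equation just outside $\Omega$ is consistent with the LQR feedback inside, so that no discontinuity in $J_\infty$ along the trajectory breaks the monotone-decrease chain. Since the terminal penalty in \cref{eq:subopt} is $\max(\bar{J}_\infty(x_T), M)$ and this equals $M$ exactly on $\partial \Omega$, while $J_\infty = \bar{J}_\infty$ inside $\Omega$, the value function is continuous across the boundary and the argument goes through without modification.
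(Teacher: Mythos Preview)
Your proposal is correct and follows essentially the same two-phase argument as the paper: use the Bellman equation outside $\Omega$ (together with \cref{lemma:subopt}) to guarantee the closed-loop trajectory hits $\Omega$ in finite time, then invoke the fact that $\Omega \subset \mathcal{D}$ so the linear controller drives the state to the origin. The paper's proof is terser (three sentences) and does not spell out positive definiteness or the boundary handoff, but the logical skeleton is identical.
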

\begin{proof}
    It is straightforward that for any $x \notin \Omega$, since $J_{\infty} (.)$ satisfies the Bellman equation outside of $\Omega$, a trajectory following the optimal control has to hit $\Omega$. Now, since $\Omega$ is within the domain of attraction $\mathcal{D}$ of the linear controller, $\bar{J}_{\infty}$ is a CLF for the system and the state $x_t \rightarrow 0$, as $t\rightarrow \infty$. Thus, the optimal cost function $J_{\infty}(.)$ is a CLF for the system.
\end{proof}
We note here that the CLF $J_{\infty}(.)$ as defined is continuous but not necessarily smooth, in particular, there is a continuous but non-smooth transition at the boundary of $\Omega$ for any finite $M$.\\

Furthermore, one can also establish the following convergence result. Given $M < \infty$, define:
\begin{align}
    {J}^M_\infty(x) = \min_{u_t, T} \left[\sum_{t=0}^{T-1} c(x_t,u_t) + \max \left(\bar{J}_\infty(x_T), M \right) \right], \label{eq:JM}
\end{align}
where recall that $\Omega = \{x:\bar{J}_\infty(x) = x' P_\infty x \leq M\}$.

\begin{theorem}
Given the optimal control problem (\cref{eq:JM}), the solution satisfies: $\lim_{M \rightarrow 0} {J}^M_\infty(x) = {J}^*_\infty(x)$ for all $x\in \mathcal{X}$.
\end{theorem}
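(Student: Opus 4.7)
My plan is to adapt the sandwich argument used in \cref{lemma.2} and \cref{theorem.1}, pinching $J^M_\infty(x)$ between $J^*_\infty(x) - \varepsilon$ and $J^*_\infty(x) + \varepsilon$ for all sufficiently small $M$. Fix $\varepsilon > 0$ and let $M_\varepsilon$ be the threshold from the definition of $\Omega_\varepsilon$ in \cref{lemma.1}, so that $|\bar{J}_\infty(y) - J^*_\infty(y)| < \varepsilon$ whenever $\bar{J}_\infty(y) \leq M_\varepsilon$. I would set $M_0 = \min(M_\varepsilon, \varepsilon)$ and restrict attention to $M < M_0$; then $\Omega_M = \{y : \bar{J}_\infty(y) \leq M\}$ is contained in $\Omega_\varepsilon$. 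By \cref{lemma:subopt}, the optimum in \cref{eq:JM} is attained at the first hitting time $T^M_*$ of $\Omega_M$; the terminal cost $\max(\bar{J}_\infty(x^M_{T^M_*}), M)$ then collapses to $M$, yielding the clean identity
\[
J^M_\infty(x) = \sum_{t=0}^{T^M_* - 1} c(x^M_t, u^M_t) + M.
\]

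For the upper bound I would plug the true infinite-horizon optimal trajectory $(x^*_t, u^*_t)$ into \cref{eq:JM} as a feasible policy. Because $x^*_t \to 0$ by \cref{corollary.2} and $\bar{J}_\infty$ is a continuous quadratic vanishing at the origin, this trajectory enters $\Omega_M$ at some finite time $\tilde{T}$. Truncating there gives
\[
J^M_\infty(x) \leq \sum_{t=0}^{\tilde{T}-1} c(x^*_t, u^*_t) + M = J^*_\infty(x) - J^*_\infty(x^*_{\tilde{T}}) + M \leq J^*_\infty(x) + \varepsilon,
\]
using $J^*_\infty \geq 0$ and $M \leq \varepsilon$.

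For the lower bound I would test the $J^M_\infty$-optimal trajectory against $J^*_\infty$. Feasibility of the concatenation ``the trajectory $(x^M_t, u^M_t)$ on $[0,T^M_*)$ followed by the optimal tail'' gives
\[
J^*_\infty(x) \leq \sum_{t=0}^{T^M_* - 1} c(x^M_t, u^M_t) + J^*_\infty(x^M_{T^M_*}).
\]
Since $x^M_{T^M_*} \in \Omega_M \subset \Omega_\varepsilon$, the closeness bound yields $J^*_\infty(x^M_{T^M_*}) \leq \bar{J}_\infty(x^M_{T^M_*}) + \varepsilon \leq M + \varepsilon$; substituting the identity for $J^M_\infty(x)$ from the first paragraph gives $J^*_\infty(x) \leq J^M_\infty(x) + \varepsilon$. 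Combining the two bounds proves $|J^M_\infty(x) - J^*_\infty(x)| \leq \varepsilon$ for every $M < M_0$, so $J^M_\infty(x) \to J^*_\infty(x)$ as $M \to 0$.

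The main obstacle is the finite hitting-time claim for the \emph{true} optimal trajectory used in the upper bound: unlike \cref{lemma.1}, which concerns the FH-OCP optimizer, here I need that $(x^*_t, u^*_t)$ itself reaches $\Omega_M$ in finite time. I would bridge this by combining $x^*_t \to 0$ from \cref{corollary.2} with continuity of $\bar{J}_\infty(x) = x' P_\infty x$ at the origin. A minor subtlety is that a discrete-time trajectory may overshoot into the interior of $\Omega_M$, so one only has $\bar{J}_\infty(x^M_{T^M_*}) \leq M$ rather than equality; however, the $\max$ in the terminal cost collapses to $M$ in either case, so all the inequalities above remain valid.
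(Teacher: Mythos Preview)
Your proposal is correct and follows the same strategy the paper points to (it simply says the proof ``is essentially identical to that of Lemma~\ref{lemma.2} and Theorem~\ref{theorem.1}''): pick $\varepsilon>0$, work inside the set $\Omega_\varepsilon$ where $|\bar J_\infty-J^*_\infty|<\varepsilon$, and show the approximate cost is within $\varepsilon$ of $J^*_\infty(x)$ for all sufficiently small $M$. Your execution is in fact a bit cleaner than the paper's Lemma~\ref{lemma.2}: instead of case-splitting on whether the true optimal trajectory hits $\Omega_\varepsilon$ before or after the approximate one, you run a direct sandwich---plugging the true optimal trajectory into \cref{eq:JM} for the upper bound, and extending the \cref{eq:JM}-optimizer with the optimal tail for the lower bound---which bypasses the three-case analysis entirely. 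The one ingredient you add beyond Lemma~\ref{lemma.2} is the finite hitting time of $\Omega_M$ by the \emph{true} optimal trajectory, and you correctly source this from \cref{corollary.2} together with continuity of $\bar J_\infty$ at the origin.
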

\begin{proof}
The proof is essentially identical to that of Lemma 2 and Theorem 1. 
\end{proof}

Finally, note that ${J}^M_\infty(\cdot) \neq {J}^*_\infty(\cdot)$ for any finite $M<\infty$. However, ${J}^M_\infty(\cdot)$ still satisfies Bellman's equation, and this is a non-trivial case where satisfying the BE is not sufficient for optimality.

\subsection{Discussion}
In the following, we discuss some of the salient points of the proposed regularization approach.\\

\textit{Computational efficiency:} The approach proposed converges to the optimal cost function as the time horizon $T$ becomes large, or equivalently, the size of the terminal set characterized by $M$ becomes small. A valid question is how is this different from posing a problem without any terminal cost function and letting the time become large, where in the limit, the optimal cost function of the finite horizon problem converges to the optimal infinite horizon cost? To see the advantage of the proposed approach, let the optimal transfer time be $T^*_M(x)$ from a state $x$ into the set $\Omega_M = \{\bar{J}_{\infty} \leq M\}$. Further, denote the effective regulation time required within the set $\Omega_M$ as $T^R_M(x)$. Then, it is easy to see that $T^*_M(x) << T^R_M(x),$ for any finite $M$. Therefore, the optimization problem for our approach has much shorter horizon when compared to a finite horizon problem without any terminal cost. This aspect is clearly borne out by our experiments in the next Section where the effective horizon of the problem we need to solve is a small fraction of that required for the finite horizon problem without any terminal cost.\\

\textit{Free final time and global asymptotic stability:} Another critical aspect of our approach is the free final time of the optimization problem from a given state $x \in \mathcal{X}$. In fact, we have already noted that the free final time is the reason that the approximate cost function $J_{\infty}^M(.)$ satisfies the stationary Bellman equation, as opposed to a time varying Bellman equation, thereby establishing global aysmptotic stability. If we fixed the time horizon, then we would be back to the situation in MPC with the stabilizing ingredient of a terminal cost/ set constraint, and could only conclude asymptotic stability locally as in \cite{mayne2014model}. Physically, this makes sense since the optimal transfer time from different points in the state space will be different, and thus, the finite horizon in the optimal control problem needs to be variable. The free final time models precisely this aspect of the problem leading to the GAS closed loop system. The free final time problem can be solved efficiently by taking a ``large enough" horizon rather than sweping through time. Due to the argument above, ``large enough" in our case is much smaller than the ``large enough" time required in the problem without a terminal constraint. \\

\textit{Optimal control solution:} Our approach requires the solution of an optimal control problem, and in general, satisfying the Bellman equation requires that we find the global minimum of the nonlinear optimal control problem. In our recent work \cite{arxiv_PONA,PoNA_ACC}, we have shown that under the conditions of affine in control dynamics, and quadratic in control cost (which can be relaxed to a convex in control cost), satisfying the minimum principle \cite{bryson} is also sufficient to assure us of a global minimum. Albeit the problem is nominally non-convex, using the Method of Characteristics, we may show that the trajectories satisfying the minimum principle are guaranteed to be unique. Further, we have shown that the iterated LQR (ILQR) algorithm can be guaranteed to satisfy the minimum principle under relatively mild conditions, thereby assuring us that we can find the global minimum \cite{arxiv_D2C2.0,D2C2.0_CDC}. Hence, we use the ILQR approach to solve the optimal control problem in our experiments in Section IV.

\section{Empirical Results}\label{section:results}
The proposed theory is tested with simulations on two nonlinear systems: cart-pole and pendulum. The simulations were done on MATLAB, using ode45 to simulate the nonlinear dynamics of the two systems. The list of experiments performed is shown in Table \ref{tab:experiments}. The simulations are designed to take the system upright from different initial conditions and balance the pendulum/pole at the top. The simulations are run for a total of $150$ time-steps with a time discretization of $0.1s$. We take the cost function to be quadratic: $c(x,u) = x'Qx + u'Ru$. 

\begin{table}[!htbp]
    \centering
    \begin{tabular}{|c|c|c|c|}
        \hline
         Exp.\# & System & Initial state & Terminal state  \\
         \hline 
         1. & Cart-pole $(x,\theta, \dot{x}, \dot{\theta})$ & $(0,0,0,0)$ & $(0,\pi,0,0)$ \\
         \hline
         2. & Cart-pole $(x,\theta, \dot{x}, \dot{\theta})$ & $(0,3\pi/4,0,0)$ & $(0,\pi,0,0)$ \\
         \hline
         3. & Pendulum $(\theta, \dot{\theta})$ & $(0,0)$ & $(\pi,0)$ \\
         \hline 
         4. & Pendulum $(\theta, \dot{\theta})$ & $(5\pi/12,0)$ & $(\pi,0)$ \\
         \hline
    \end{tabular}
    \caption{Experiments performed}
    \label{tab:experiments}
\end{table}

\begin{figure}[!htbp]
\centering
    \sbox0{\subfloat[Total cost (Exp.1)]{\includegraphics[width=0.48\linewidth]{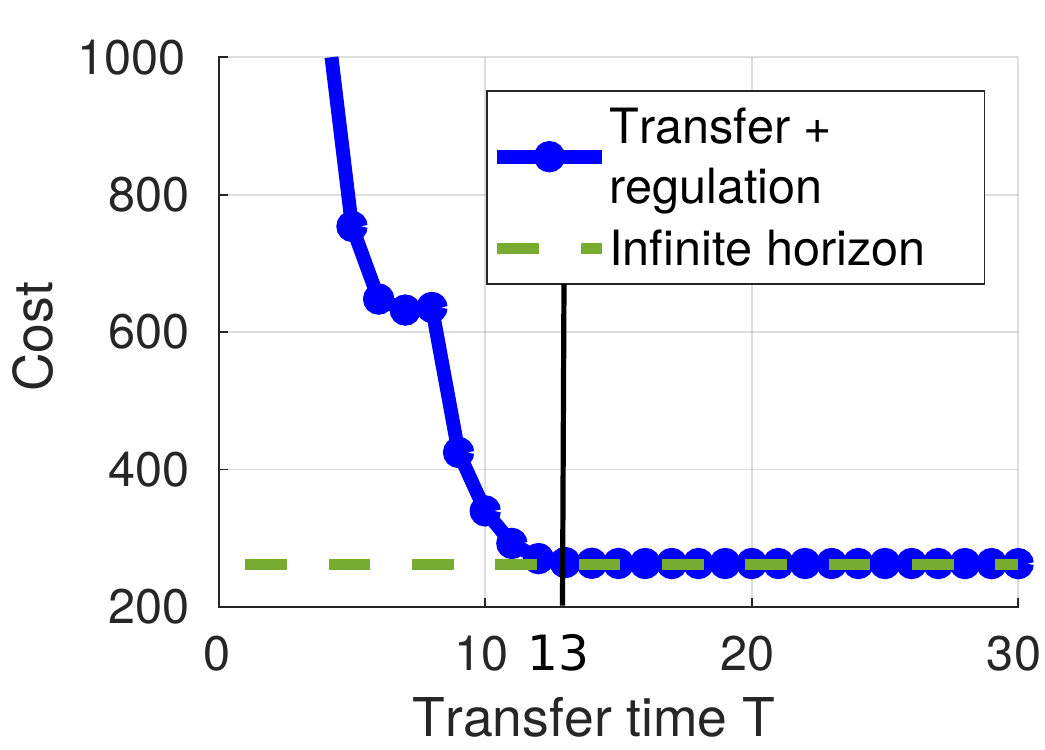}}}
    \sbox1{\subfloat[Terminal regulation cost (Exp.1)]{\includegraphics[width=0.48\linewidth]{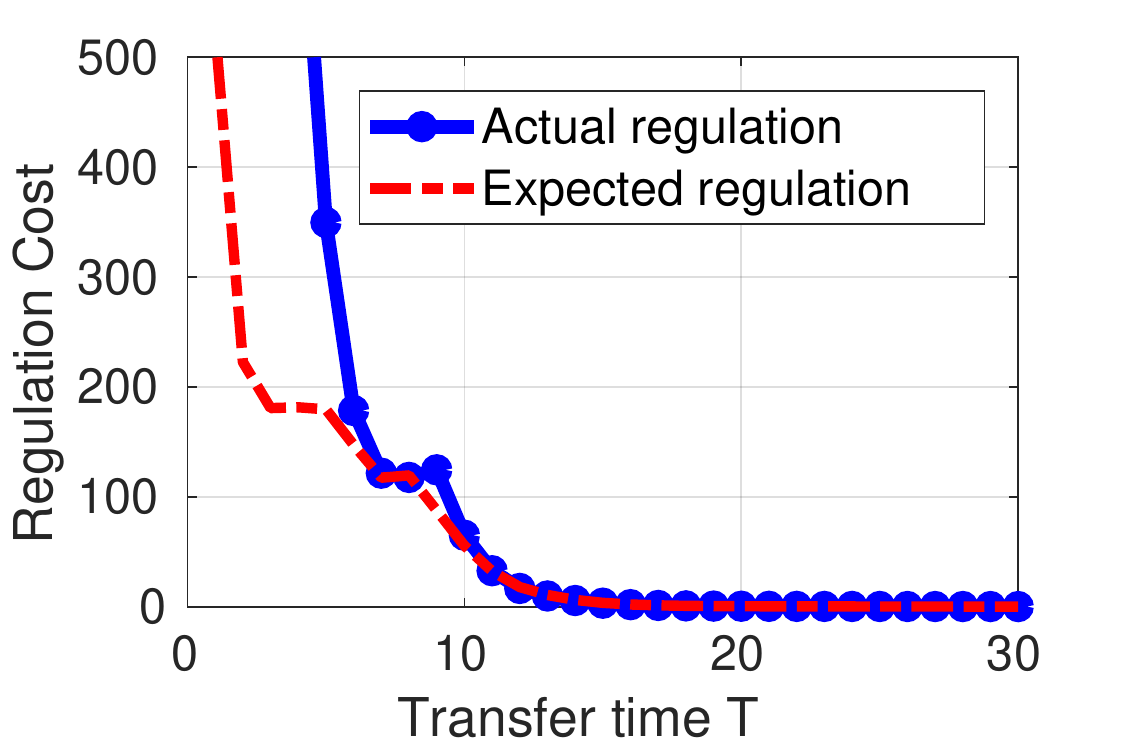}}}
    \sbox2{\subfloat[Error at the end of \\ finite horizon transfer (Exp.1)]{\includegraphics[width=0.48\linewidth]{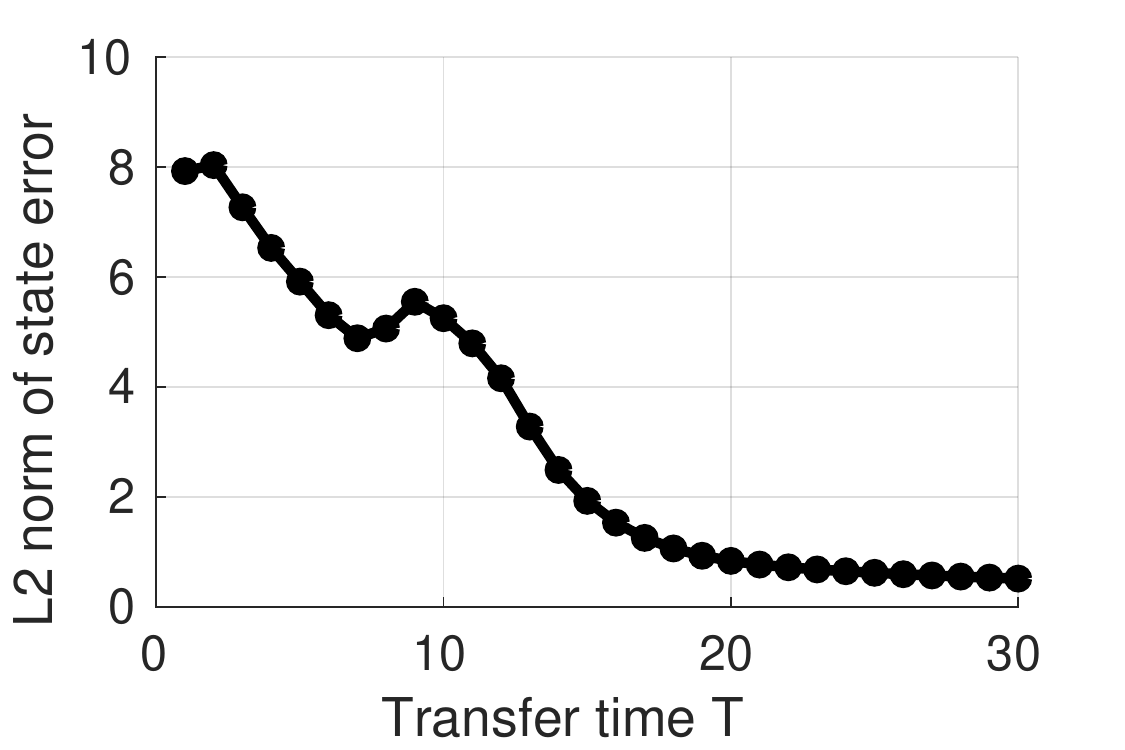}}}
    \sbox3{\subfloat[Total cost (Exp.2)]{\includegraphics[width=0.48\linewidth]{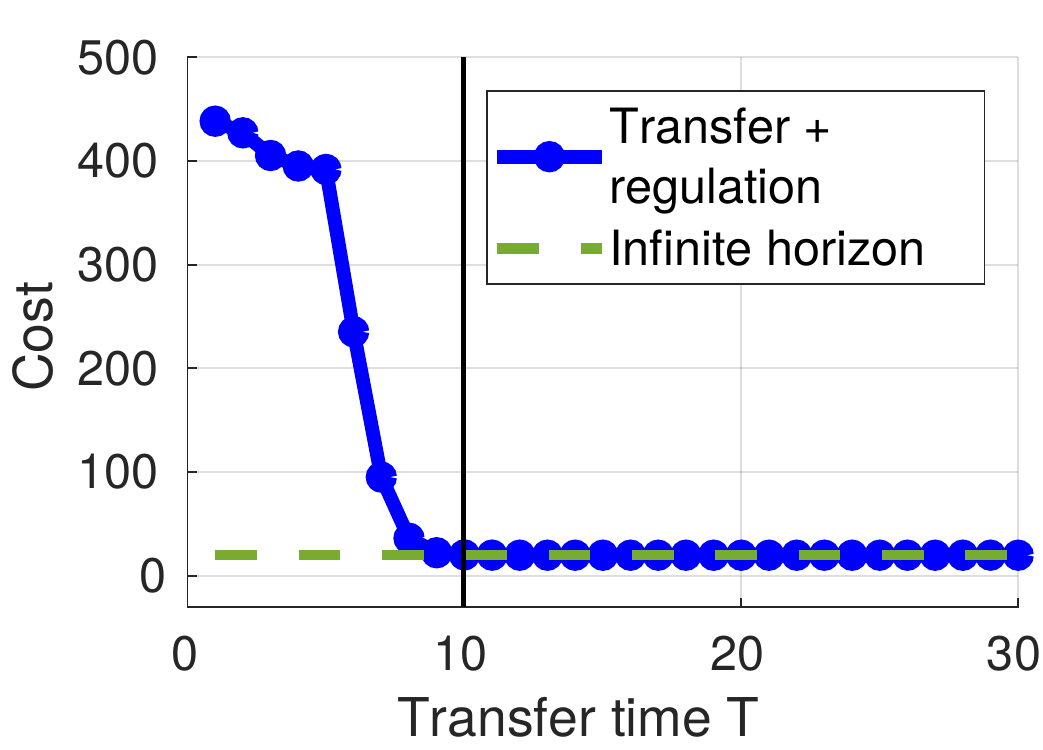}}}
    \sbox4{\subfloat[Terminal regulation cost (Exp.2)]{\includegraphics[width=0.48\linewidth]{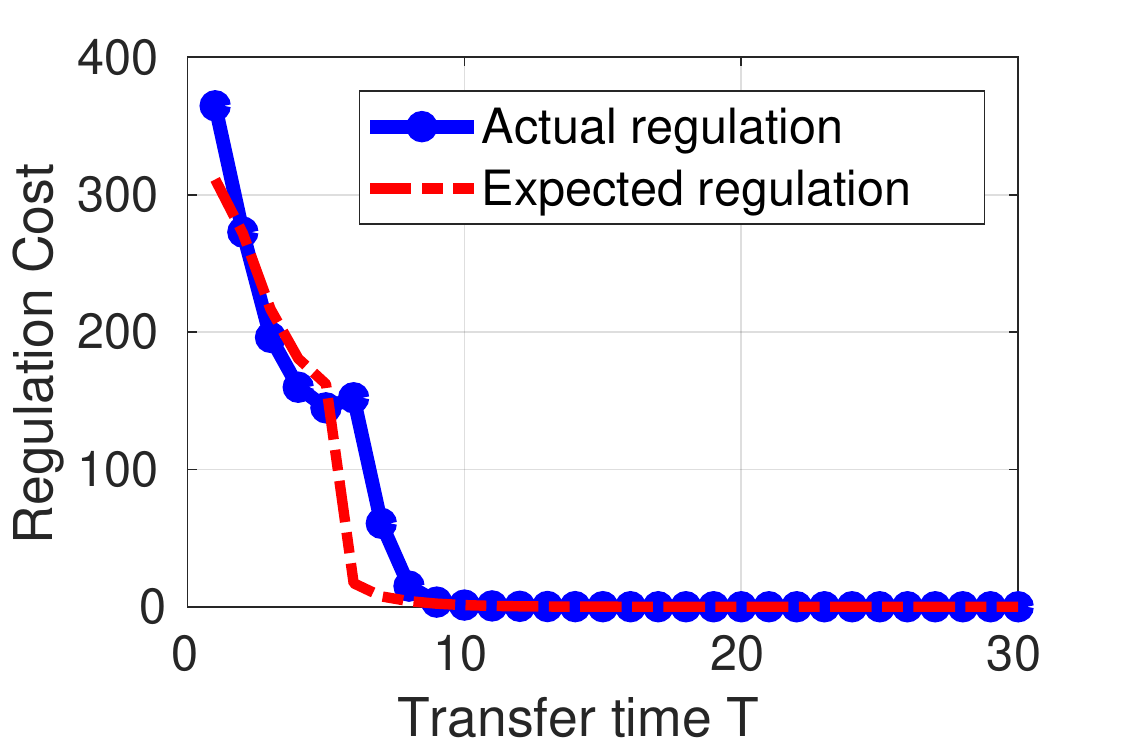}}}
    \sbox5{\subfloat[Error at the end of \\ finite horizon transfer (Exp.2)]{\includegraphics[width=0.48\linewidth]{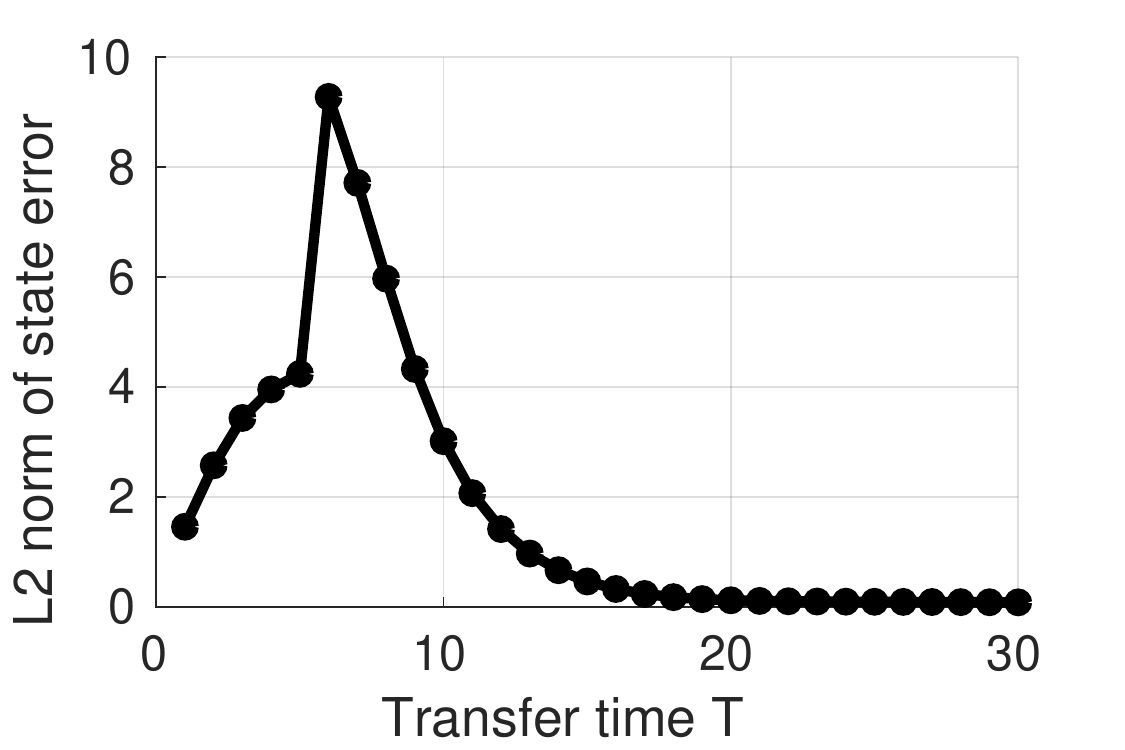}}}
    \centering
    \usebox0\hfil \usebox3\par
  \usebox1\hfil \usebox4\par
  \usebox2\hfil \usebox5
    \caption{Cart-pole  swing-up. Note plots labeled (a) - (c) are Exp.1 and (d) - (f) are Exp.2 in Table~\ref{tab:experiments}. The results in plots (a) and (d) show that the finite horizon transfer $+$ regulation costs converge to the infinite horizon cost after some time $T^*$ (marked with a black vertical line). One can also note that the value of $T^*$ in plots (a) and (d) are different since the system starts from different initial conditions.}
    \label{fig:cartpole}
\end{figure}

\begin{figure}[!htbp]
\centering
    \sbox0{\subfloat[Total cost (Exp.3)]{\includegraphics[width=0.48\linewidth]{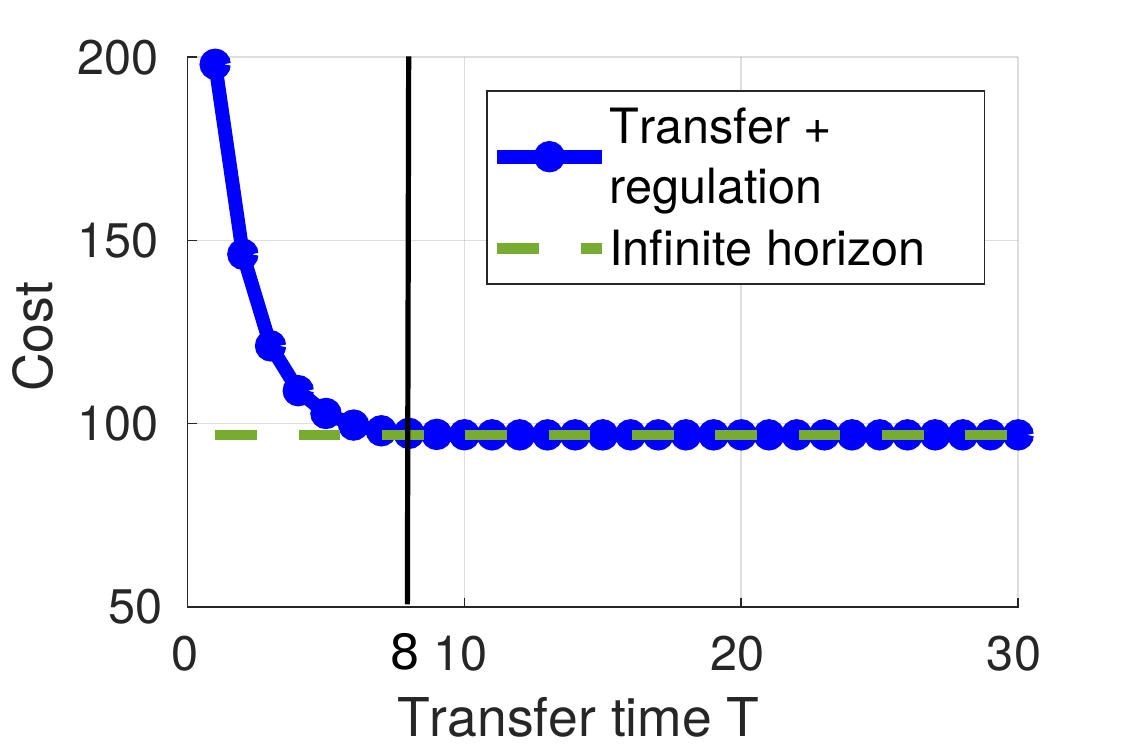}}}
    \sbox1{\subfloat[Terminal regulation cost (Exp.3)]{\includegraphics[width=0.48\linewidth]{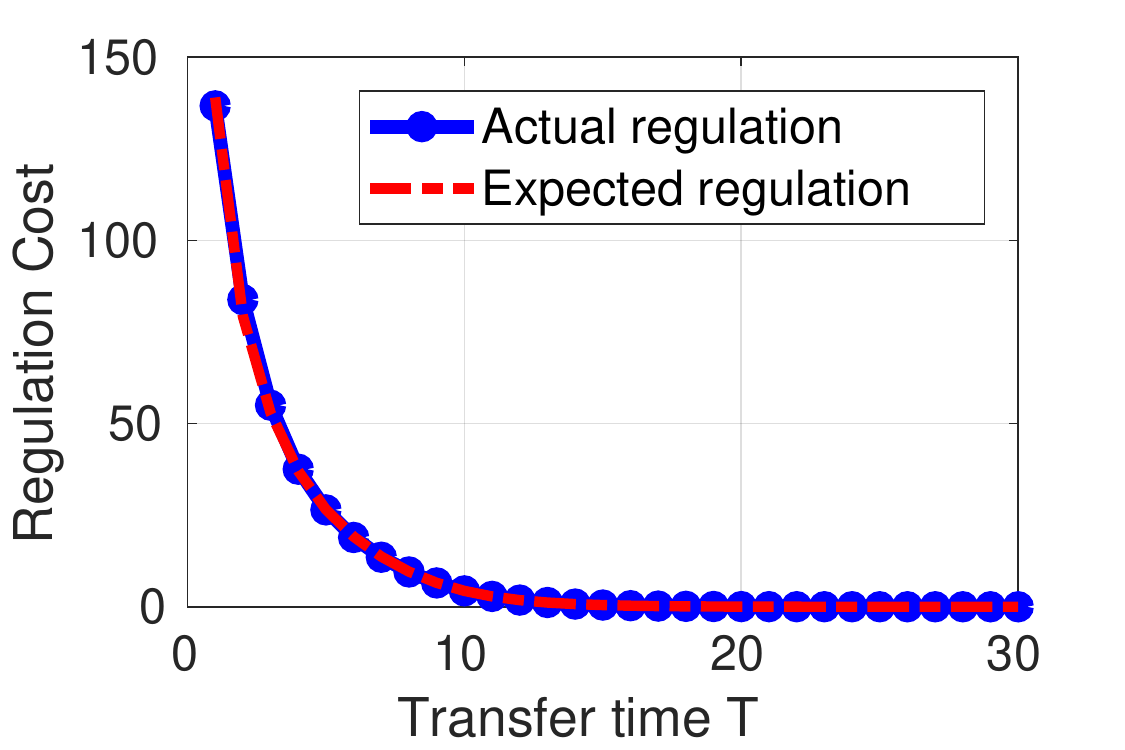}}}
    \sbox2{\subfloat[Error at the end of \\ finite horizon transfer (Exp.3)]{\includegraphics[width=0.48\linewidth]{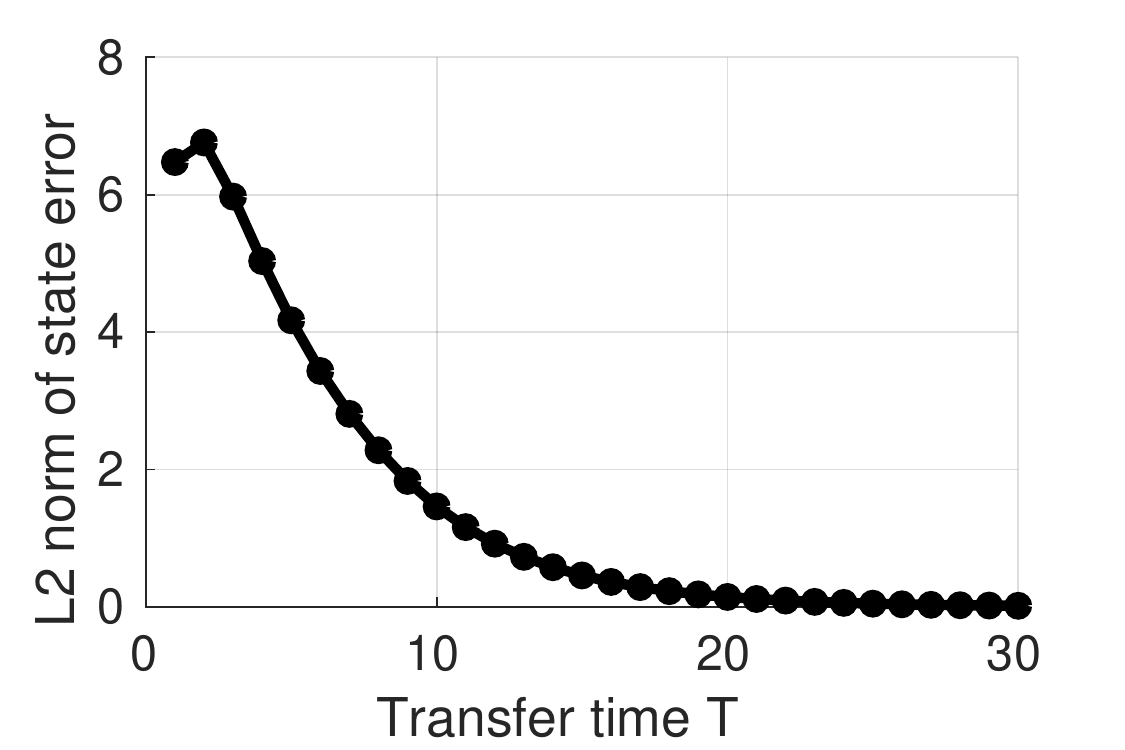}}}
    \sbox3{\subfloat[Total cost (Exp.4)]{\includegraphics[width=0.48\linewidth]{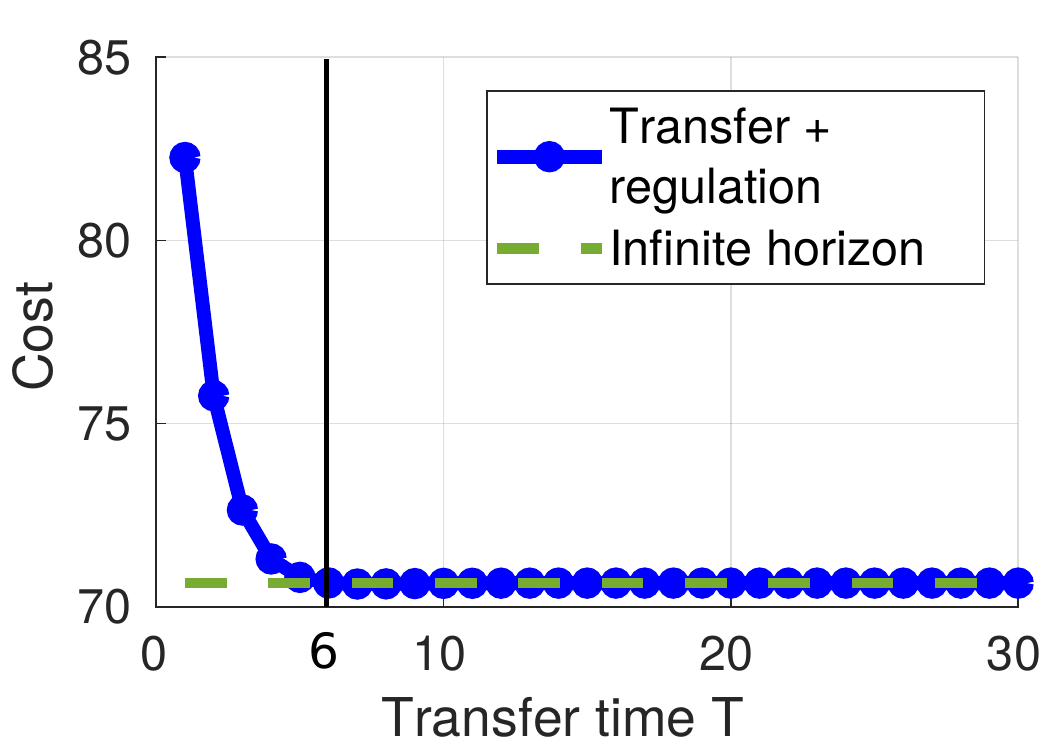}}}
    \sbox4{\subfloat[Terminal regulation cost (Exp.4)]{\includegraphics[width=0.48\linewidth]{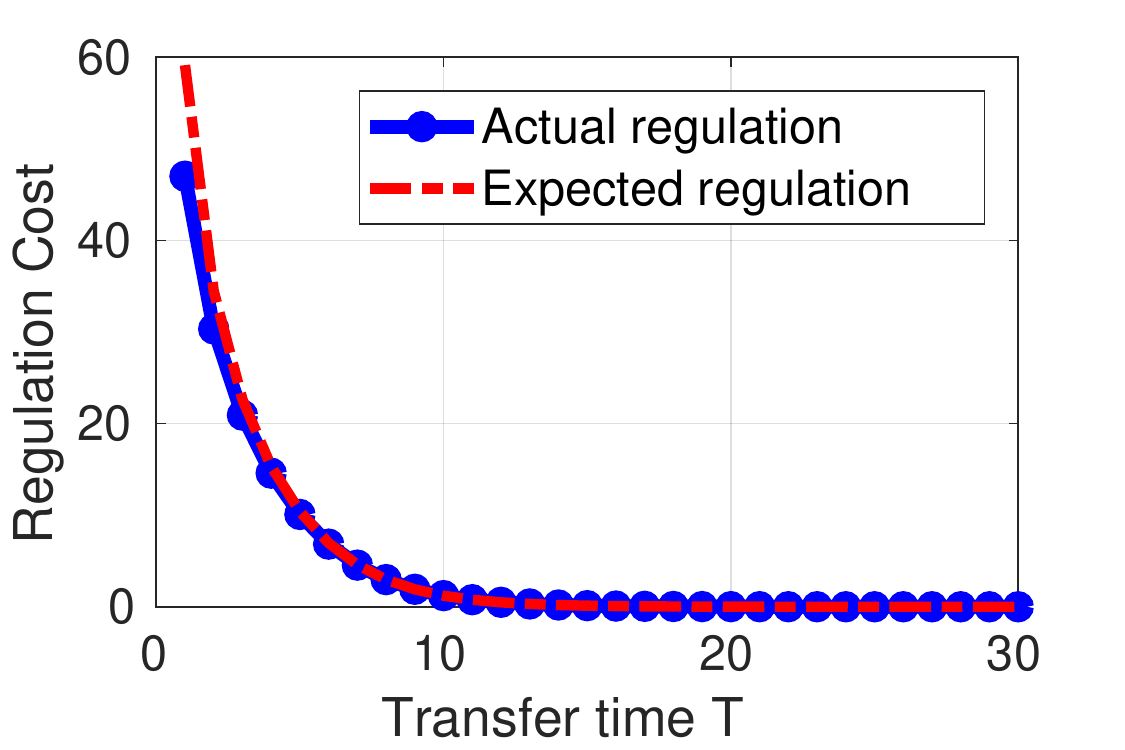}}}
    \sbox5{\subfloat[Error at the end of \\ finite horizon transfer (Exp.4)]{\includegraphics[width=0.48\linewidth]{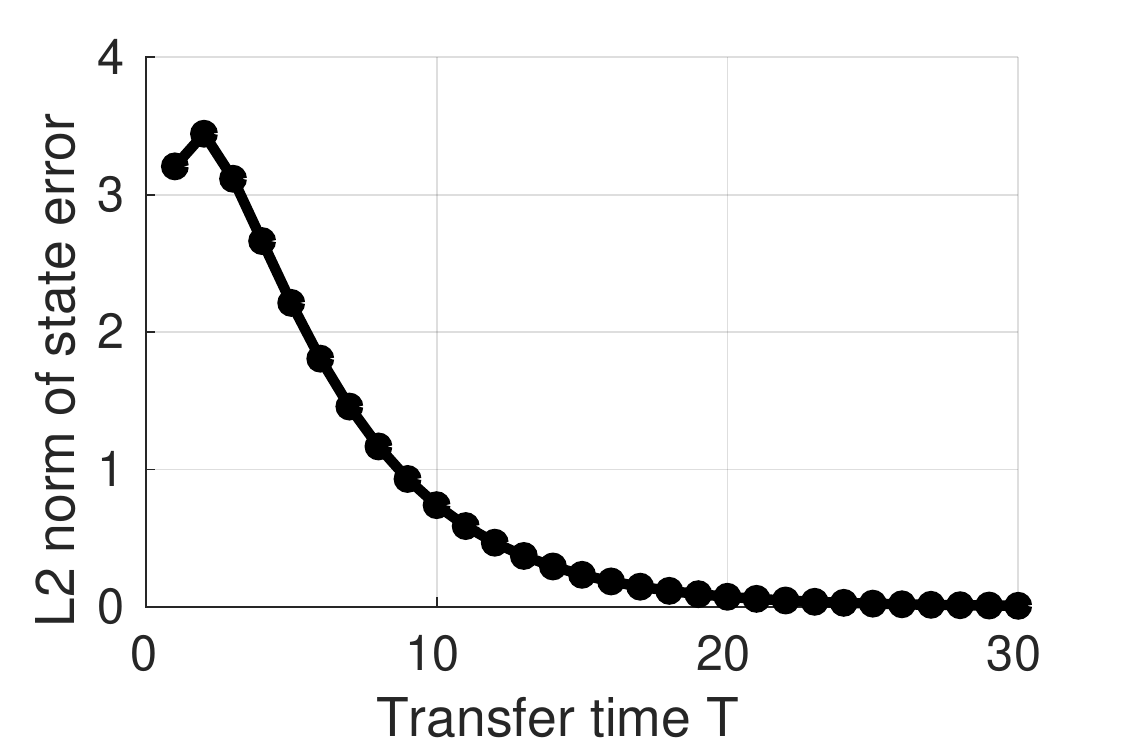}}}
    \centering
    \usebox0\hfil \usebox3\par
  \usebox1\hfil \usebox4\par
  \usebox2\hfil \usebox5
    \caption{Pendulum  swing-up. Note plots labeled (a) - (c) are Exp.3 and (d) - (f) are Exp.4 in Table~\ref{tab:experiments}.}
    \label{fig:pendulum}
\end{figure}

\begin{figure}[!htbp]
    \centering
    \includegraphics[width=\linewidth]{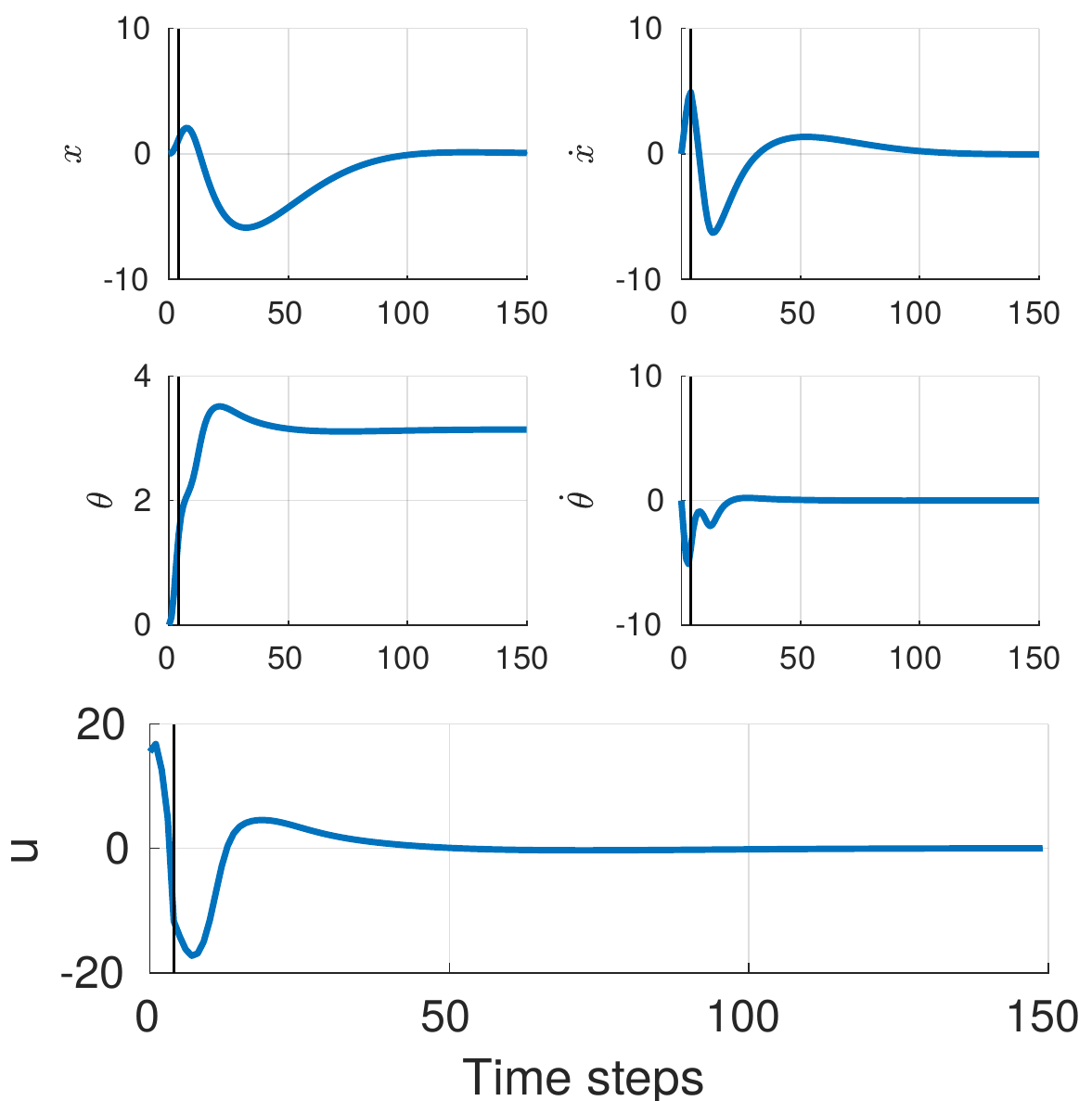}
    \caption{Response of the cart-pole for horizon $T=4$. The black vertical line indicates the change from the finite horizon to the regulation control. Note the jump in the states and control during the transition. The control jumps from 4.8 to -11.7 during the transition from the finite horizon controller to the regulator. }
    \label{fig:cartpole_response}
\end{figure}
\begin{figure}[!htbp]
    \centering
    \includegraphics[width=\linewidth]{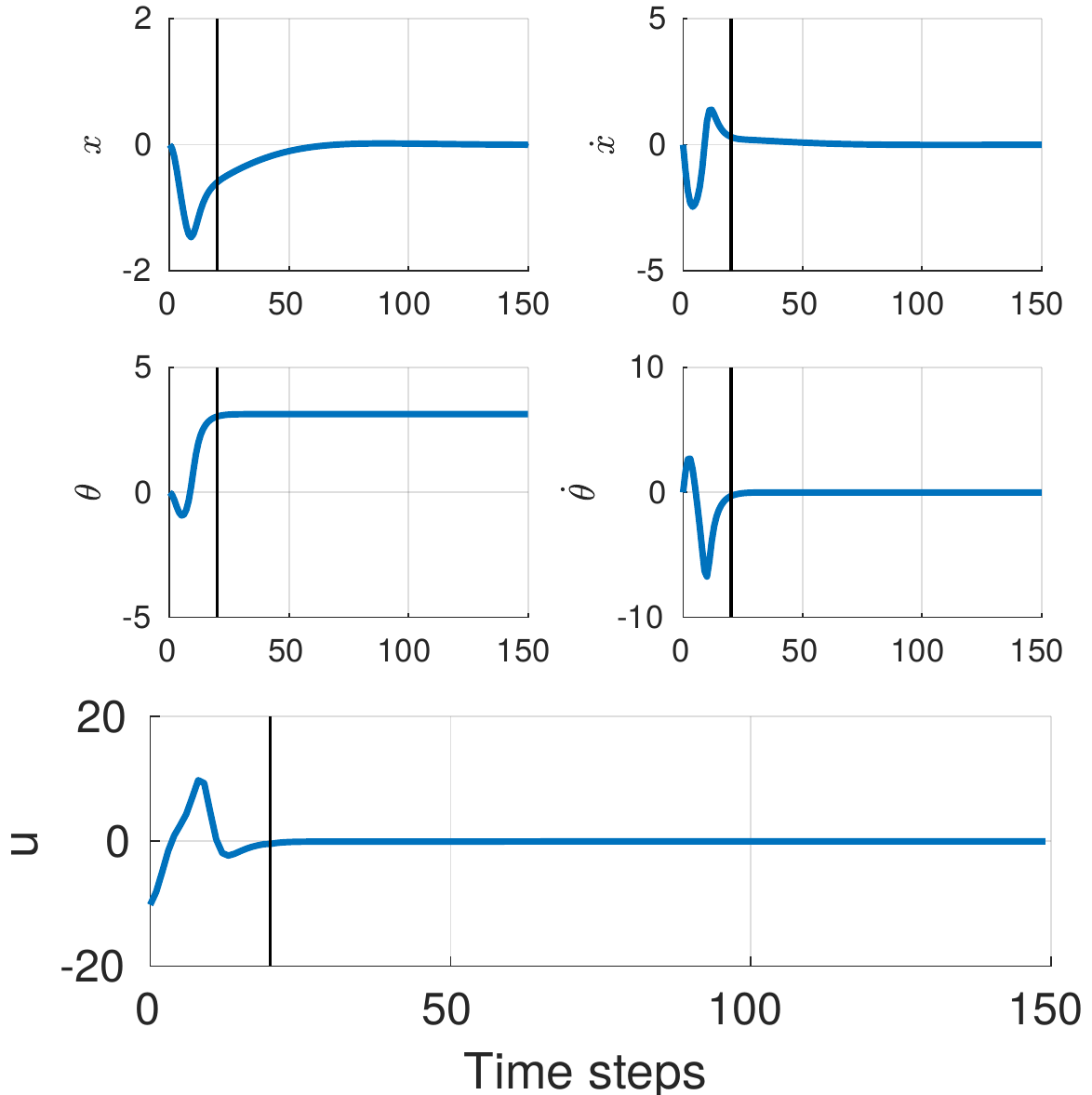}
    \caption{Response of the cart-pole for horizon $T=20$. The black vertical line indicates the change from the finite horizon to the regulation control. The transition is smooth in this case as opposed to \cref{fig:cartpole_response}}
    \label{fig:cartpole_response1}
\end{figure}

Our objective in these experiments is to show that the cost in our formulation of the FH-OCP converges to the infinite horizon optimal cost: $ J^T_\infty(x) = \min_{u_t}  \sum_{t=0}^{T-1} c(x_t,u_t) +\bar{J}_\infty(x_T),$ subject to the nonlinear dynamics, converges to the the optimal cost $J^*_\infty(x)$ as $T$ is increased, i.e. For $T > T^*$, $|J^T_\infty(x) - J^*_\infty| < \varepsilon$. The experiments will also show that the time $T^*$ also depends on the initial condition of the system.
    
To find $\bar{J}_\infty(x)$, we linearize the system around the origin, and since the cost is quadratic, we can calculate $P_\infty$ at the origin using the stationary Riccati equation. Hence, $\bar{J}_\infty(x) = x' P_\infty x$. Using $\bar{J}_\infty(x)$ as the terminal cost for the FH-OCP, we solve it for different values of $T$ using the iterative Linear Quadratic Regulator (iLQR) algorithm for optimization \cite{ILQG_tassa2012synthesis}. It has been shown that one can converge to the unique global optimum for smooth nonlinear problems with control affine dynamics and quadratic control cost by satisfying the necessary conditions for optimal control \cite{mohamed2022acc}, which iLQR does. So, the problem of local optimum is avoided in our experiments. After the finite horizon or `transfer time' $T$, we switch the control to the LQR regulator designed at the origin till $t=150$. Since the true infinite horizon problem cannot be solved, we solve the FH-OCP for $T=150$ without any terminal cost, as a surrogate for the infinite horizon cost which is labeled as ``Infinite horizon" in the plots. We note that solving for smaller horizons does not stabilize the systems to the upright position.

In our plots, the cost mentioned as ``Transfer + regulation" is the total cost of the finite horizon controller and the regulator. In all four experiments, we will observe that the total cost converges after some time $T = T^*$. It can also be seen that the converged cost matches the infinite horizon cost. The experiments also show that the value $T^*$ is dependent on the initial conditions chosen. The expected regulation cost is calculated using $x_T' P_\infty x_T$, where $x_T$ is the terminal state of the FH-OCP. The actual regulation is calculated from the trajectory of the terminal regulator. The mismatch in expected and actual regulation cost implies that the linear terminal regulator is not optimal for the state $x_T$. 

In practice, one would typically solve the infinite horizon OCP by formulating it as a finite horizon OCP with a very large $T$, which would be computationally very expensive. In this case, it is $T= 150$. But by regularizing the finite horizon OCP using the terminal cost $\bar{J}_\infty(\cdot)$ as defined in section \ref{section:sol_IHOCP}, one can solve it for a much smaller horizon $T^*$. Note that in our experiments, $T^* < 15 << 150 = T$ for both the cases considered, and we achieve near-optimality as shown in our results. Solving for a smaller horizon is computationally much cheaper, and also opens up avenues where one has to replan frequently in a stochastic setting. Moreover, since the cost-to-go from our formulation is a control Lyapunov function, it guarantees global asymptotic stability for the origin.

\section{Conclusion}\label{section:Conclusion}
In this paper, we have developed a tractable approach to the approximate solution of nonlinear infinite horizon optimal control problems that is globally asymptotically stabilizing and converges to the true optimal solution in the limit of a vanishing terminal set. Empirical results show that the practical convergence occurs in a very short time compared to the effective horizon required for a solution of the infinite horizon without the free final time and terminal set regularization employed by the approach. Future work will involve the incorporation of state and control constraints and the testing of the approach on a suite of nonlinear problems with varying degrees of complexity which will require a data-based generalization leveraging our prior work \cite{D2C2.0_CDC,arxiv_D2C2.0}. We shall also consider the extension of the approach to the problem of optimal nonlinear output feedback control along with a suitable data-based generalization.
\printbibliography
\end{document}